\newtheorem{theorem}{Theorem}[section]
\newtheorem{corollary}[theorem]{Corollary}
\newtheorem{lemma}[theorem]{Lemma}
\newtheorem{remark}[theorem]{Remark}
\numberwithin{equation}{section}
\title{Uniqueness result for semi-linear wave equations with sources} %%%%%%%%%%%%
\author[Qiu]{Dong Qiu}
\address{School of Mathematical Science, Zhejiang University}
\email{qiudong@zju.edu.cn}
\author[Xu]{Xiang Xu}
\address{School of Mathematical Sciences, and Center for Interdisciplinary Applied Mathematics, Zhejiang University}
\email{xxu@zju.edu.cn}
\author[Ye]{Yeqiong Ye}
\address{School of Mathematical Science, Zhejiang University}
\email{yeyeqiong@zju.edu.cn}
\author[Zhou]{Ting Zhou}
\address{School of Mathematical Science, Zhejiang University}
\email{ting\_zhou@zju.edu.cn}
\thanks{\\ \emph{Date: }\today\\
\emph{2020 MSN Mathematics Subject Classification.} 35R30, 65N21\\
\emph{Key words}: Semi-linear wave equations,  Inverse boundary value problems, Higher order linearization, Simultaneous recovery.}
\begin{document}

\begin{abstract}
This paper addresses the inverse problem of simultaneously recovering multiple unknown parameters for semilinear wave equations from boundary measurements. We consider an initial-boundary value problem for a wave equation with a general semilinear term and an internal source. The inverse problem is to determine the nonlinear coefficients (potentials), the source term, and the initial data from the Dirichlet-to-Neumann (DtN) map.

Our approach combines higher-order linearization and the construction of complex geometrical optics (CGO) solutions. 
The main results establish that while unique recovery is not always possible, we can precisely characterize the gauge equivalence classes in the solutions to this inverse problem. For a wave equation with a polynomial nonlinearity of degree 
$n$, we prove that only the highest-order coefficient can be uniquely determined from the DtN map; the lower-order coefficients and the source can only be recovered up to a specific gauge transformation involving a function $\psi$. Furthermore, we provide sufficient conditions under which unique determination of all parameters is guaranteed. We also extend these results to various specific non-polynomial nonlinearities, demonstrating that the nature of the nonlinearity critically influences whether unique recovery or a gauge symmetry is obtained.

\end{abstract}

\maketitle

\section{Introduction}
\subsection{Statement of the problem} 
Let $\Omega \subset {\mathbb{R}^d}$ ($d\geq2$) be a bounded domain with smooth boundary $\partial \Omega$. 
%{\color{red}
For $T>0$, we denote $Q:=(0,T)\times\Omega$ and $\Sigma:=(0,T)\times\partial\Omega$. 
We study the inverse problem associated with the semilinear wave equation as the following
\begin{equation} \label{eq1.1}
\left\{   
 \begin{aligned}
&\partial_{tt} u(x,t) - \Delta u(x,t) + \sum_{i=1}^{n} q_i(x,t) u(x,t)^i= F(x,t) \ && \text{in } Q,\\
&u(x,t) = f(x,t) \ && \text{on } \Sigma,  \\
&u(x,0) =g(x), \quad \partial_t u(x,0) = h(x) \ && \text{in } \Omega.  
\end{aligned}   
\right.
\end{equation}
where $n\geq2$. 
In Section 2, we first establish the local well-posedness of \eqref{eq1.1} for the initial boundary conditions $(g, h, f)$ in a neighborhood $U_\epsilon$ (defined in Section 2) of $(g_0=u_0|_{t=0}, h_0=\partial_t u_0|_{t=0}, f_0=u_0|_\Sigma)$ where $u_0$ is an existing solution to the equation
\[\partial_{tt} u_0(x,t) - \Delta u_0(x,t) + \sum_{i=1}^{n} q_i(x,t) u_0(x,t)^i= F(x,t) \quad \text{in } Q.\]
%\begin{center}
%$(g_0,g_1,f) \in G^{m+1}$ \ \   satisfying the compatibility condition 
%\end{center}
%such that the equation \eqref{eq1.1} admits a solution $u_0 \in E^{m+1}$ with $u_0|_{(0,T)\times \partial \Omega}=f_0$, $u_0(0,x)=g_0$ and $\partial_t u_0(0,x)=g_1$, then there exists an open neighborhood $N_{m+1} \subset  G^{m+1}((0,T)$ of $(g_0,g_1,f_0)$ such that for each $(g,h,f)\in N_{m+1}$, there exists a solution $u$ to \ref{eq1.1} with $u_f|_{(0,T) \times \partial \Omega }=f$ and the solution is unique in the neighborhood of $u_0 \in E^{m+1}$.  
Then for $(g,h,f)\in U_\epsilon$, we define the Dirichlet-to-Neumann (DtN) map %$\Lambda_{\vec q,F,g,h}:=\Lambda_{q_1,\ldots,q_n, F,g,h}$ by
\begin{equation*}   
\Lambda_{\vec q,F,g,h}:=\Lambda_{q_1,q_2,\dots,q_n,F,g,h}:\,f\,\mapsto\,\partial_{\nu}u_f|_{\Sigma},
\end{equation*}
where $\nu$ is the unit outer normal vector on $\partial \Omega$ and $u_f$ is the unique solution to \eqref{eq1.1}.
In this paper, we consider the simultaneous determination of the coefficients $\vec q:=(q_1, \ldots, q_n)$, the source $F$ and the initial data $g$ and $h$ from the DtN map $\Lambda_{\vec q,F,g,h}$. %which is referred as active measurements.
%}
%Consider equation \eqref{eq1.1} for two sets $(q_{1,1},q_{2,1},\dots,q_{n,1},F_1)$ and   $(q_{1,2},q_{2,2},\dots,q_{n,2},F_2)$, the corresponding DtN maps  $\Lambda_{q_{1,1},q_{2,1},\dots,q_{n,1},F_1}$ and $\Lambda_{q_{1,2},q_{2,2},\dots,q_{n,2},F_2}$ are  defined on $N_1 \subset H^{m+1}([0,T]\times \partial \Omega)$ and $N_2 \subset H^{m+1}([0,T]\times \partial \Omega)$, respectively. \\  
%We have $\Lambda_{q_{1,1},q_{2,1},\dots,q_{n,1},F_1}(f)=\Lambda_{q_{1,2},q_{2,2},\dots,q_{n,2},F_2}(f)$ for any $f \in N_1 \bigcap N_2$.\\
%\textbf{Inverse  problem}:  Can we determine $q_1,q_2,\dots,q_n $ and $ F$ from the boundary measurement, DtN map  $\Lambda_{q_1,q_2,...,q_n,F}$ ? \\

Motivated by the challenge of inverting wave propagation data, researchers have extensively studied the corresponding problem's uniqueness and stability.
%Research on uniqueness and stability in recovering unknown parameters for inverse problems is motivated by practical applications. 
%We briefly summarize key results for linear equations. 
%The pioneer work of Calder\'on \cite{calderon_inverse_1980}  , known as the Calder\'on problem, inspired significant developments in inverse problems, in particular in the construction of complex geometric optics solutions . The Calder\'on  problem was solved in the fundamental papers by  Sylvester and Uhlmann \cite{sylvester_global_1987}  in dimensions three and higher, and by Nachman \cite{nachman_global_1996} and Astala and  P\"{a}iv\"{a}rinta \cite{astala_calderons_2006} in the two dimensional case.% For introductory surveys , see  \cite{feldman_calderproblem_nodate,uhlmann_30_2014} .   \\
To solve inverse problems for linear hyperbolic equations, the boundary control method was developed by Belishev in \cite{belishev1987approach}. The boundary control method relies on a sharp unique continuation result for the wave equation that was proved by Tataru \cite{MR1326909}. However, this result is only valid if the coefficients  are time-independent or the lower order terms depend analytically on time \cite{eskin2007inverse}. 
%{\color{blue}(mention results in geometric setting to recover metric...)}

For linear wave equations, numerous studies have addressed this problem using Dirichlet-to-Neumann map observations . In \cite{rakesh_uniqueness_1988}, Rakesh and Symes proved that the DtN map determines uniquely a time-independent potential and  in \cite{isakov_inverse_1991} Isakov considered the determination of a time-independent potential and a damping coefficient. The uniqueness from partial boundary observations has been considered in \cite{eskin_inverse_2007, kian_unique_2017}. 
We remark that for the linear equation $(\partial_{tt}-\Delta)u+Vu=0$, the problem of recovering a time-dependent potential $V$ from the DtN map is still open in general. 
For this problem, Kian and Oksanen  \cite{kian_recovery_2019} proved unique determination of potential given the Cauchy data set or certain subset on the whole boundary $\partial \overline{Q}$ on a compact Riemannian manifold. The unique recovery of a time-dependent magnetic vector-valued potential and an electric scalar-valued potential on a Riemannian manifold has been treated in \cite{feizmohammadi_recovery_2021}. We also mention that the stability issue related to this problem  has been studied in \cite{aicha_stability_2015,bao_sensitivity_2014,bellassoued_stability_2009,bellassoued_stability_2011,kian_stability_2016,stefanov_stability_1998}. \\
A recent observation by Kurylev, Lassas and Uhlmann \cite{kurylev_inverse_2018} is that a nonlinearity in the studied equation can be used as a beneficial tool in the corresponding inverse problem. By developing the higher order linearization method and exploring the propagation and nonlinear interaction of singularities in distorted plane waves, 
%That is, we take boundary Dirichlet data of the form $f=\sum\limits_{i=1}^{n} \epsilon_i f_i$, where $\epsilon_i$ , $i=1,2,\cdots,n$ are small parameters. Since $\Lambda$ is a nonlinear map, $\Lambda(\sum\limits_{i=1}^{n}\epsilon_i f_i)$ encodes more information than $\left\{ \Lambda(f_i)   \right\}_{i=1,2,\cdots,n}$ . Crucially, essential information is provided by
%\begin{equation*}
%\frac{\partial^n}{\partial \epsilon_1 \cdots \partial \epsilon_n}  \Bigg|_{\epsilon_1=\cdots=\epsilon_n=0} \Lambda\Big(\sum\limits_{i=1}^{n}\epsilon_if_i \Big).
%\end{equation*}
%By using higher order linearization, some still unsolved inverse problems for the linear case have been solved in the nonlinear case. For the semilinear wave equation with quadratic nonlinearity, the authors of \cite{kurylev_inverse_2018}} 
 they proved that the source-to-solution map determines the global topology, the differentiable structure and the conformal class of the metric of a globally hyperbolic 3+1-dimensional Lorentzian manifold. Subsequently, it was shown in \cite{ hintz_dirichlet--neumann_2022} 
that using the boundary DtN map one can recover the metric and the nonlinear coefficient.  % in the equation $\Box_g u+au^4=0$  up to a natural obstruction. 
For general operators, we refer to \cite{oksanen_inverse_2024} for uniqueness results of determining coefficients in nonlinear real principal type equations. For applications in other physical models, such as Westervelt equations, JMGT equations, nonlinear elastic wave equations, and nonlinear progressive wave equations, have been extensively studied in \cite{acosta_nonlinear_2022,fu_inverse_2024,jiang_inverse_2025,li_inverse_2024,uhlmann_determination_2024,uhlmann_inverse_2023,lai_partial_2024,feizmohammadi_recovery_2022,wang_inverse_2019,nakamura_inverse_2020,hintz_inverse_2022}.  \\
Simultaneous recovery of sources (initial source or equation source) and medium coefficients have been studied extensively due to their importance in applications. For example, motivated by the coupled-physics high resolution imaging method such as  photoacoustic tomography, the inverse problem of simultaneous recovery of the wave speed and the initial conditions for linear wave equations has been studied in \cite{MR3180684,xu2006photoacoustic}. For a semilinear wave equation, the unique determination of the nonlinearity and the initial sources is recently shown in \cite{lin_determining_2024}. For the inhomogeneous semilinear elliptic equation $\Delta u+a(x,u)=F$, the authors of \cite{liimatainen_uniqueness_2024} have shown the uniqueness of the nonlinear coefficients $\partial_u^k a(x,u_0)$ and the source $F$ up to gauge symmetry.  For a semilinear parabolic equation, the authors of \cite{kian_determining_2024} have shown  determination of gauge class of the semilinear term and  gauge invariance break in several situations.
We refer the reader to \cite{sabarreto_recovery_2024,kian_determination_2021,kian_recovery_2023} for recent advances in the uniqueness of inversion for nonlinear hyperbolic and parabolic systems.
The research of stable inversion for nonlinear hyperbolic equations is rather recent. 
The authors of \cite{lassas_uniqueness_2022,lassas_stability_2025} have established the stability of H\"older type to recover nonlinear coefficients from the DtN map.
The authors of \cite{chen_stable_2025} developed a methodology to establish stability estimates for {inversion of coefficients and nonlinearities}.

In this paper, we investigate the simultaneous recovery of unknown potentials, source terms, and initial data. Our approach combines the higher order linearization, approximation of CGO solutions and observability inequalities to establish the main results. Motivated by \cite{liimatainen_uniqueness_2024}, we consider the higher order linearization method in the neighborhood of a nonzero solution $u_0$. This implies that the potential of linearized equation contains the unknown solution, making the simultaneous recovery of multiple potentials and sources more challenging.  \\
\\

It is clear that in general the uniqueness is not guaranteed. When the equation is linear, let $u$ be the solution to 
\begin{equation}\label{eq1.2}
\left\{
\begin{aligned}
&(\partial_{tt} - \Delta) u + q u = F && \text{in } Q,  \\
&u = f && \text{on } \Sigma, \\ 
&u = \partial_t u = 0 && \text{on } \{t=0\} \times \Omega. 
\end{aligned}    
\right.
\end{equation}
and let $\psi(t,x)$ be an arbitrary nonzero $C^2\left(\overline Q\right)$-function satisfying $\psi(0,x)=\partial_t\psi(0,x)=0$ in $\Omega$ and $\psi|_{\Sigma}=\partial_\nu\psi|_{\Sigma}=0$.
%}
Then we have that $\tilde u=u+\psi$ satisfies
\begin{equation*}
(\partial_{tt}-\Delta)\tilde u + q \tilde u %=(\partial_{tt} - \Delta)(u+\psi)+q(u+ \psi) 
=\widetilde F:=F+(\partial_{tt}-\Delta) \psi +q \psi,
\end{equation*}
which implies
\[\Lambda_{q,F,0,0}=\Lambda_{q,\widetilde F,0,0}.\]
Our main result is to characterize analogous gauge equivalence classes for semilinear wave equations.

Before we present our main results, let us introduce the following notations and definitions. For nonnegative integers $m,k$, define the following function spaces: 
\begin{align*}
&H^k(\partial\Omega) =  \left\{ v \in L^2(\partial\Omega)\ \middle|  \ D^\beta v \in L^2(\partial\Omega), \forall\ \beta\in\mathbb{N}^d, |\beta| \leq k \right\}, \\
& H^m_0(\Omega) = \left\{  u \in H^m(\Omega)\ \middle|  \ D^{\alpha} u = 0 \text{ on } \partial \Omega \text{ in the trace sense} , \forall\ \alpha\in\mathbb{N}^d, |\alpha| \leq m-1 \right\}, \\
   &H^m\left((0,T) \times \partial\Omega\right) = \left\{ u \in L^2\bigl((0,T) \times \partial\Omega\bigr) \ \middle| \ \partial_t^j \partial_x^\beta u \in L^2\bigl((0,T) \times \partial\Omega\bigr),\ \forall\, j + |\beta| \leq m \right\},   \\
   & H_0^m\left((0,T); H^k(\partial\Omega)\right) = \left\{ u \in L^2\left((0,T); H^k(\partial\Omega)\right) \ \middle| \ 
        \begin{aligned}
            \ \partial_t^j u \in L^2\left((0,T); H^k(\partial\Omega)\right), \forall j \leq m  \\
             \ \partial_t^j u(0,\cdot) = 0 \  \text{in the trace sense} ,\forall j < m
        \end{aligned}
    \right\}, \\
&O_m:=  \left \{f\in H^m(\Sigma)\ \middle|\  f \in H_0^{m-k}\left((0,T);H^k(\partial \Omega)\right), \  \text{for} \  k=0,1,\dots,m-1  \right\},   
\end{align*}

and
%Let $G_{m+1}$ be the space：
\begin{align}\label{def_G}
G_{m+1}:=H_0^{m+1}(\Omega) \times H_0^{m}(\Omega) \times O_{m+1}
\end{align}
with the norm
\begin{align*}
||(g,h,f)||_{G_{m+1}}=||g||_{H^{m+1}(\Omega)}+||h||_{H^{m}(\Omega)}+||f||_{H^{m+1}(\Sigma)}.
\end{align*}
The convenient spaces for solutions of the wave equation are called \textit{energy spaces} $E^m$, defined as  
\begin{equation*}
E^m=\underset{0\leq k \leq m}{\bigcap} C^k\left([0,T];H^{m-k}(\Omega)\right),
\end{equation*}
equipped with the norm 
\begin{equation*}
||u||_{E^m}=\underset{0<t<T}{\sup}\sum\limits_{k=0}^{m}||\partial_{t}^{k}u(\cdot,t))||_{H^{m-k}(\Omega)}.
\end{equation*}
The space $E^m$ is an algebra if $m >d+1$ (see \cite{choquet-bruhat_general_2009}). Moreover, it satisfies the norm estimate
\begin{equation*}
||uv||_{E^m}\leq C_m ||u||_{E^m}||v||_{E^m},  \quad \text{for all } u,v\in E^m,
\end{equation*}
where $C_m$ is a constant depending on $m$. \\

\subsection{Main Results}
In the following, we assume that $T^*$ is a constant, $T^*>2 \operatorname{diam}(\Omega)$ and $t_1$, $t_2$ are two constants such that $T^* < t_1<t_2<T-T^*$. We also assume that $n\geq2$ and  $m>d+1$.
\begin{theorem}\label{lem_1.3}
 Assume that $a_j(t,x,z): \mathbb{R} \times \overline{\Omega} \times \mathbb{R} \to \mathbb{R}$ satisfies $a_j(t,x,0)=0$ for all $x \in \Omega$ and the map $z \mapsto a_j(\cdot,\cdot,z)$ is analytic with values in $E^m$ for $j=1,2$.  Assume that    $supp\left(a_j(t,x,z)\right)\subseteq [t_1,t_2]\times \overline{\Omega} \times \mathbb{R}$ for $j=1,2$, and $F_j \in E^{m}$ with {$\partial_t^k F_j(0,\cdot)\in H_0^{m-k}(\Omega)$} and $supp(F_j) \subset [t_1,T] \times \Omega$, $g \in H_0^{m+1}(\Omega)$, $h \in H_0^m(\Omega)$ for $j=1,2$ and $k=0,1,\cdots,m-2$. Let $\Lambda_{ a_j,F_j}$ be the DtN map of the equation 
\begin{equation} \label{eqa}
\left\{
\begin{aligned}
&\partial_{tt} u_j - \Delta u_j + a_j(t,x,u_j) = F_j
&& \text{in } Q, \\
&u_j = f
&& \text{on } \Sigma, \\
&u_j =g, \quad \partial_t u_j = h
&& \text{on }\{t=0\} \times \Omega.
\end{aligned}
\right.
\end{equation}
Suppose that there is an open set $\mathcal N \subset O_{m+1}$ such that 
\begin{align}
\Lambda_{ a_1,F_1}(f)=\Lambda_{ a_2,F_2}(f) \quad \text{for any} \ f \in \mathcal N. 
\end{align}
Then for any $f_0 \in \mathcal N$, we have
\begin{align*}
\partial_z^ka_1(t,x,u_{0,1})=\partial_z^ka_2(t,x,u_{0,2}) \quad \text{in} \ [t_1, t_2] \times  \Omega, \text{ for any}\ k \in  \mathbb{N}.
\end{align*}
Here $u_{0,1}$ and $u_{0,2}$ are the solutions to \eqref{eqa} with boundary condition $u_{0,j}|_{\Sigma}=f_{0}$. 
\end{theorem}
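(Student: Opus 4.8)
The plan is to peel off the coefficients $\partial_z^k a_j$ one order at a time by higher-order linearization of the DtN map at $f_0$, reducing at each order to an inverse problem for a linear wave equation with a time-dependent potential, and then to invert the latter through an integral identity fed with approximate CGO solutions. The whole scheme hinges on the fact that all the unknown coefficients $\partial_z^k a_j(t,x,u_{0,j})$ are supported in $[t_1,t_2]\times\Omega$, together with the margins $T^*>2\operatorname{diam}(\Omega)$ and $T^*<t_1<t_2<T-T^*$.

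First I would set up the linearization. For fixed $f_0\in\mathcal N$, boundary perturbations $f_1,\dots,f_N\in O_{m+1}$ and small parameters $\varepsilon_1,\dots,\varepsilon_N$, the local well-posedness and analytic dependence from Section 2 make the solution $u_j^{\vec\varepsilon}$ of \eqref{eqa} with boundary value $f_0+\sum_l\varepsilon_l f_l$ analytic in $\vec\varepsilon$; since $\mathcal N$ is open this datum stays in $\mathcal N$ for $\vec\varepsilon$ small, so the hypothesis gives $\partial_\nu u_1^{\vec\varepsilon}=\partial_\nu u_2^{\vec\varepsilon}$ on $\Sigma$ and hence equality of all mixed $\vec\varepsilon$-derivatives of the Neumann traces at $\vec\varepsilon=0$. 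Differentiating \eqref{eqa} once shows that $v_j^{(l)}:=\partial_{\varepsilon_l}u_j^{\vec\varepsilon}\big|_{0}$ solves the linear wave equation with potential $q_j^{(1)}:=\partial_z a_j(t,x,u_{0,j})$, Dirichlet datum $f_l$, and zero Cauchy data at $t=0$; note $q_j^{(1)}$ is supported in $[t_1,t_2]\times\Omega$ because $a_j$ is.

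For the base case $k=1$, setting $V:=v_1^{(l)}-v_2^{(l)}$ gives $(\partial_{tt}-\Delta+q_1^{(1)})V=-(q_1^{(1)}-q_2^{(1)})v_2^{(l)}$ with $V|_{t=0}=\partial_tV|_{t=0}=0$ and, from the previous step, $V|_\Sigma=\partial_\nu V|_\Sigma=0$. Pairing with any $\phi$ solving $(\partial_{tt}-\Delta+q_1^{(1)})\phi=0$ subject to $\phi(T,\cdot)=\partial_t\phi(T,\cdot)=0$ and integrating by parts (every boundary contribution vanishes) yields the identity \[\int_Q (q_1^{(1)}-q_2^{(1)})\,v_2^{(l)}\,\phi\,dx\,dt=0.\] I would then choose $v_2^{(l)}$ and $\phi$ as approximate CGO solutions of the form $e^{\zeta\cdot(t,x)}(1+r)$ with complex null phases $\zeta$ (so that $e^{\zeta\cdot(t,x)}$ solves the free wave equation) whose sum equals a prescribed frequency $i\xi$; the remainder $r$ is controlled using the compact time support of the potentials, and the margins guarantee that such solutions can be built and that the final-time condition on $\phi$ can be met. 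Passing to the large-parameter limit turns the identity into $\widehat{(q_1^{(1)}-q_2^{(1)})}(\xi)=0$ for the admissible frequencies, hence $q_1^{(1)}=q_2^{(1)}$.

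For the induction step I would assume the conclusion up to order $k-1$; then the lower-order linearized solutions coincide for $j=1,2$ (same potential $q^{(1)}$, same lower coefficients, same data), so that the $k$-fold derivative $W:=\partial_{\varepsilon_1}\cdots\partial_{\varepsilon_k}(u_1^{\vec\varepsilon}-u_2^{\vec\varepsilon})\big|_0$ satisfies, after a Fa\`a di Bruno expansion in which all lower terms cancel, \[(\partial_{tt}-\Delta+q^{(1)})W=-(q_1^{(k)}-q_2^{(k)})\,v^{(1)}\cdots v^{(k)},\] where $q_j^{(k)}:=\partial_z^k a_j(t,x,u_{0,j})$ and $W$ carries the same vanishing Cauchy and Neumann data as before. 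Letting $k-2$ of the factors $v^{(l)}$ approximate the constant $1$ while keeping $v^{(1)}$ and the test solution $\phi$ as the oscillatory CGO pair, the same integral identity and limit yield $q_1^{(k)}=q_2^{(k)}$, completing the induction. The hard part will be the CGO inversion: recovering a genuinely time-dependent potential is open in general and here really depends on the compact time support and the geometric margins, while realizing the required CGO Dirichlet traces within the merely open admissible set $\mathcal N$ forces one to argue by density together with an observability inequality, which I expect to be the most delicate point of the argument.
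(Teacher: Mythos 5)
Your overall strategy is the same as the paper's: linearize to order $k$ around $u_{0,j}$, observe that the lower-order terms cancel in the difference of the $k$-th order equations, pair with a backward solution to get a multilinear integral identity for $\partial_z^k a_1(\cdot,u_{0,1})-\partial_z^k a_2(\cdot,u_{0,2})$, and invert using density of restricted solutions (proved via observability) together with oscillatory solutions whose phases cancel. Your base case is sound, and your choice of linear null phases with $\zeta_1+\zeta_2=i\xi$ followed by a Paley--Wiener argument in the time frequency (using the compact time support of the potentials) is a legitimate, more classical alternative to the paper's spherical-phase GO solutions $e^{\pm i\tau[|x-x_0|+t]}a+R$ and its appeal to the Kian--Oksanen light-ray argument; just note that you only obtain $\widehat{q_1^{(1)}-q_2^{(1)}}(\xi)=0$ on the spacelike cone $|\xi_0|\le|\xi'|$, so the analytic continuation in $\xi_0$ afforded by compact time support is not an optional refinement but the step that closes the argument.

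There is, however, one concrete misstep in your induction step: you propose to let $k-2$ of the factors $v^{(l)}$ ``approximate the constant $1$.'' This is not available. The density lemma (the paper's Lemma \ref{lem:approx}) only lets you approximate, on $(t_1,t_2)\times\Omega$, functions that are themselves exact solutions of the linearized equation $(\partial_{tt}-\Delta+\mathfrak q_1)v=0$; the constant $1$ solves this only where $\mathfrak q_1=\partial_z a_1(t,x,u_{0,1})$ vanishes, which is precisely false on the region $[t_1,t_2]\times\Omega$ where you are trying to recover the coefficients. The paper avoids this by iterating: with two factors taken as the cancelling oscillatory pair and the remaining factors held as arbitrary admissible solutions, the large-parameter limit yields the pointwise identity $\bigl(\partial_z^{k+1}a_1(\cdot,u_{0,1})-\partial_z^{k+1}a_2(\cdot,u_{0,2})\bigr)\,u^{(1),3}\cdots u^{(1),k+1}v=0$ in $(t_1,t_2)\times\Omega$, which is then fed back into the same scheme with one fewer factor until the coefficient itself is shown to vanish (this is exactly the two-stage reduction carried out for the quadratic case in Subsection \ref{subsec:quad}). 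Your identity and cancellation of the known lower-order terms are correct, so the repair is purely at this last inversion stage, but as written the ``constants'' device would fail.
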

\begin{theorem} \label{thm1.1}
Assume that  $\vec q_j(t,x):=\left(q_{1,j}(t,x),\ldots,q_{n,j}(t,x)\right)\in (E^{m})^n$ with $supp(q_{i,j})\subseteq [t_1,t_2]\times \overline{\Omega}$ for $i=1,2,... ,n$ and $j=1,2$, and $F_j \in E^{m}$ with {$\partial_t^k F_j(0,\cdot)\in H_0^{m-k}(\Omega)$} and $supp(F_j) \subset [t_1,T] \times \Omega$, $g_j \in H_0^{m+1}(\Omega)$, $h_j \in H_0^m(\Omega)$ for $j=1,2$ and $k=0,1,\cdots,m-2$. Let $\Lambda_{\vec q_j,F_j,g_j,h_j}$ be the DtN map of the equation 
\begin{equation}\label{eq_nonlinear}
\left\{
\begin{aligned}
&\partial_{tt} u_j - \Delta u_j + \sum_{i=1}^{n} q_{i,j} u_j^i = F_j
&& \text{in } Q, \\
&u_j = f
&& \text{on } \Sigma, \\
&u_j =g_j, \quad \partial_t u_j = h_j
&& \text{on }\{t=0\} \times \Omega.
\end{aligned}
\right.
\end{equation}
Suppose that there is an open set $\mathcal N \subset O_{m+1}$ such that 
\begin{align}\label{eq:thm1_DN}
\Lambda_{\vec q_1,F_1,g_1,h_1}(f)=\Lambda_{\vec q_{2},F_2,g_2,h_2}(f) \quad \text{for any} \ f \in \mathcal N. 
\end{align}
Then there exists $\psi \in E^{m+1}$ with $\psi|_\Sigma=\partial_{\nu} \psi|_ \Sigma=0$ and $\psi=0$ in $[0,t_1] \times \Omega$ such that 
\begin{equation} \label{eq:thm1_result}
\left\{
\begin{aligned}
    q_{n,1} &= q_{n,2}  && \ \text{in } \ [t_1,t_2]\times \Omega  , \\
    q_{n-1,1} &= q_{n-1,2}+\binom{n}{1}q_{n,2} \psi && \ \text{in } \ [t_1,t_2]\times \Omega , \\
    &\vdots \\
    q_{n-k,1} &= \sum_{l=0}^{k}\binom{n-k+l}{l} q_{n-k+l,2} \psi^l && \ \text{in } \ [t_1,t_2]\times \Omega , \\
    &\vdots \\
    F_1 &= F_2-(\partial_{tt}-\Delta) \psi -\sum_{i=1}^{n} q_{i,2} \psi^i  && \ \text{in } \ [t_1,t_2]\times \Omega, \\
    g_1&=g_2 \  \text{and } h_1=h_2  && \ \text{in }  \Omega.
\end{aligned}
\right.
\end{equation}
Conversely, if the condition \eqref{eq:thm1_result} holds for some $\psi \in E^{m+1}$ with $\psi|_\Sigma=\partial_{\nu} \psi|_\Sigma=0$ and  $\psi=0$ in $[0,t_1] \times \Omega$, then
\begin{equation*}
\Lambda_{\vec q_1,F_1,g,h}(f)=\Lambda_{\vec q_2,F_2,g,h}(f) \ \  \text{for any} \ f \in \mathcal N.
\end{equation*}
%We demonstrate that the special cases of the quadratic forms and cubic forms of the theorem in the following remark.
\end{theorem}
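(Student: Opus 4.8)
The plan is to reduce the forward implication to Theorem~\ref{lem_1.3} and then extract the gauge relations by a purely algebraic manipulation, handling the initial data and the support of the gauge function by unique continuation. First I would observe that the polynomial nonlinearities $a_j(t,x,z)=\sum_{i=1}^n q_{i,j}(t,x)z^i$ meet the hypotheses of Theorem~\ref{lem_1.3}: each is analytic in $z$ with values in the algebra $E^m$, vanishes at $z=0$, and is supported in $[t_1,t_2]\times\overline\Omega$. Fixing one $f_0\in\mathcal N$ and letting $u_{0,1},u_{0,2}$ be the corresponding background solutions of \eqref{eq_nonlinear}, Theorem~\ref{lem_1.3} yields
\[\partial_z^k a_1(t,x,u_{0,1})=\partial_z^k a_2(t,x,u_{0,2})\quad\text{in }[t_1,t_2]\times\Omega,\ \text{for every }k.\]
I would then set $\psi:=u_{0,2}-u_{0,1}$, which lies in $E^{m+1}$ by the well-posedness of Section~2, satisfies $\psi|_\Sigma=f_0-f_0=0$, and satisfies $\partial_\nu\psi|_\Sigma=\Lambda_{\vec q_2,F_2,g_2,h_2}(f_0)-\Lambda_{\vec q_1,F_1,g_1,h_1}(f_0)=0$ by the assumed coincidence of the DtN maps. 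Since the relations to be proved involve only the fixed coefficients, sources, and this single $\psi$, establishing them for one such $f_0$ suffices.

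The algebraic core is the following identity. Writing the finite Taylor expansions of the two polynomials about $u_{0,1}$ and $u_{0,2}$ respectively and using $w-u_{0,2}=z-u_{0,1}$ under the shift $w=z+\psi$, the matched derivative data cancel all terms of order $\ge 1$, leaving
\[a_1(t,x,z)=a_2(t,x,z+\psi)+\big(a_1(t,x,u_{0,1})-a_2(t,x,u_{0,2})\big).\]
Evaluating at $z=0$ and using $a_1(t,x,0)=0$ identifies the $z$-independent remainder as $-\sum_{i=1}^n q_{i,2}\psi^i$. Expanding $(z+\psi)^i$ and matching the coefficient of $z^{\,n-k}$ then produces exactly $q_{n-k,1}=\sum_{l=0}^k\binom{n-k+l}{l}q_{n-k+l,2}\psi^l$ on $[t_1,t_2]\times\Omega$ (with $q_{n,1}=q_{n,2}$ as the top case). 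Subtracting the background equations $(\partial_{tt}-\Delta)u_{0,j}+a_j(u_{0,j})=F_j$, using $u_{0,1}-u_{0,2}=-\psi$, and inserting the value of the remainder then gives the source relation $F_1=F_2-(\partial_{tt}-\Delta)\psi-\sum_i q_{i,2}\psi^i$.

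It remains to locate $\psi$ and to recover the initial data, and this is the step I expect to be the main obstacle. In $[0,t_1]\times\Omega$ all $q_{i,j}$ and $F_j$ vanish, so $\psi$ solves the free wave equation there with zero lateral Cauchy data $\psi=\partial_\nu\psi=0$ on $(0,t_1)\times\partial\Omega$; invoking unique continuation, equivalently the boundary observability inequality for the wave equation, which is available precisely because $t_1>T^*>2\operatorname{diam}(\Omega)$, forces $\psi\equiv0$ in $[0,t_1]\times\Omega$. Restricting to $t=0$ gives $g_1=g_2$ and $h_1=h_2$ and completes the vanishing assertion on $\psi$. For the converse I would run the argument in reverse: given $f\in\mathcal N$ and the solution $u_1$ of \eqref{eq_nonlinear} with data $(\vec q_1,F_1,g,h,f)$, set $u_2:=u_1+\psi$. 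Then $u_2|_\Sigma=f$, and $u_2,\partial_t u_2$ equal $g,h$ at $t=0$ because $\psi$ vanishes near $t=0$; a direct substitution using the coefficient and source relations, checked on the three slabs $[0,t_1]$, $[t_1,t_2]$, $[t_2,T]$ (on the last two the nonlinear terms in $\psi$ drop out since the $q_{i,2}$ vanish), shows $(\partial_{tt}-\Delta)u_2+\sum_i q_{i,2}u_2^i=F_2$. By uniqueness $u_2$ is the solution for the second data set, and since $\partial_\nu\psi|_\Sigma=0$ its Neumann trace agrees with that of $u_1$, yielding $\Lambda_{\vec q_1,F_1,g,h}(f)=\Lambda_{\vec q_2,F_2,g,h}(f)$.
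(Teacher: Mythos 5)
Your proposal is correct and follows essentially the same route as the paper: apply Theorem~\ref{lem_1.3} to the polynomial nonlinearity, set $\psi=u_{0,2}-u_{0,1}$, use the observability inequality of Lemma~\ref{Lemma3.1} on $(0,t_1)\times\Omega$ (where the coefficients and sources vanish) to recover $g_1=g_2$, $h_1=h_2$ and force $\psi=0$ there, and verify the converse by substituting $u_1+\psi$ into the second equation. The only difference is organizational: your Taylor-shift identity $a_1(t,x,z)=a_2(t,x,z+\psi)-\sum_{i}q_{i,2}\psi^i$ packages in one step the downward induction on $k$ and the binomial-coefficient cancellation that the paper carries out explicitly to extract $q_{n-k,1}=\sum_{l=0}^{k}\binom{n-k+l}{l}q_{n-k+l,2}\psi^l$.
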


\begin{remark}
For $n=2$, the condition \eqref{eq:thm1_result} implies that 
\[
\left\{
\begin{array}{ll}
    q_{2,1}=q_{2,2}:=q_2 &\text{in} \ [t_1,t_2]\times \Omega, \\ 
    q_{1,1}=q_{1,2}+2q_{2} \psi  &\text{in}  \ [t_1,t_2]\times \Omega, \\
    F_1=F_2-(\partial_{tt}-\Delta) \psi - q_{1,2} \psi-q_2 \psi^2   &\text{in} \ [t_1,t_2]\times \Omega.
\end{array}
\right.
\]
For $n=3$, the condition \eqref{eq:thm1_result} implies that 
\[
\left\{
\begin{array}{ll}
    q_{3,1}=q_{3,2}:=q_3   &\text{in} \ [t_1,t_2]\times \Omega, \\ 
    q_{2,1}=q_{2,2}+3q_{3} \psi   &\text{in} \ [t_1,t_2]\times \Omega, \\ 
    q_{1,1}=q_{1,2}+2q_{2,2}\psi+3q_3 \psi^2   &\text{in} \ [t_1,t_2]\times \Omega,\\
    F_1=F_2-(\partial_{tt}-\Delta) \psi - q_{1,2} \psi -q_{2,2}\psi^2-q_3 \psi^3   &\text{in} \ [t_1,t_2]\times \Omega.
\end{array}
\right.
\]
We note that this result shows that only the highest order coefficient can be uniquely determined.
\end{remark}

\begin{corollary} \label{corollary1}
Assume that $q_j \in E^{m+1}$ with $supp(q_{j})\subseteq [t_1,t_2]\times \overline{\Omega}$, $F_j \in E^{m}$ with {$\partial_t^k F_j(0,\cdot)\in H_0^{m-k}(\Omega)$} and $supp(F_j) \subset [t_1,T] \times \overline{\Omega}$, $g_j \in H_0^{m+1}(\Omega)$, $h_j \in H_0^m(\Omega)$ for $j=1,2$ and $k=0,1,\cdots,m-2$, $u_j$ is the solution to 
\begin{equation}
\left\{
\begin{aligned}
&\partial_{tt} u_j - \Delta u_j +  q_{j} u_j = F_j
&& \text{in } Q, \\
&u_j = f
&& \text{on } \Sigma, \\
&u_j =g_j, \quad \partial_t u_j = h_j
&& \text{on }\{t=0\} \times \Omega.
\end{aligned}
\right.
\end{equation}
Suppose that there is an open set $\mathcal N \subset O^{m+1}$ such that 
\begin{equation*}
\Lambda_{q_1,F_1,g_1,h_1}(f)=\Lambda_{q_2,F_2,g_2,h_2}(f)  \quad \text{for }  f\in \mathcal N.
\end{equation*}
Then the potential and source can be determined 
\begin{equation*}
q_1=q_2:=q  \quad \text{in} \ [t_1,t_2] \times \Omega,  \quad g_1=g_2, \ h_1=h_2 \ \text{in }   \Omega,
\end{equation*}
and there exists $\psi \in E^{m+1}$, $\psi|_{\Sigma}=\partial_{\nu}  \psi|_{\Sigma}=0 $ and $\psi=0$ in $[0,t_1] \times \Omega$ such that 
\begin{equation*}
F_1=F_2+\partial_{tt} \psi-\Delta \psi+q \psi.
\end{equation*}
\end{corollary}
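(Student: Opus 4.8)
The plan is to use the linearity of each equation to reduce the agreement of the two full (affine) DtN maps to the agreement of their homogeneous parts, then recover the potential by an integral identity populated with CGO solutions, and finally identify the gauge function $\psi$ with the difference of the two solutions. For a fixed admissible $f$, write $u_{f,j}=w_j+v_{f,j}$, where $w_j$ solves the problem with source $F_j$ and initial data $(g_j,h_j)$ but zero lateral data, and $v_{f,j}$ solves the homogeneous problem $(\partial_{tt}-\Delta+q_j)v_{f,j}=0$ with $v_{f,j}|_\Sigma=f$ and vanishing Cauchy data at $t=0$. Then $\Lambda_{q_j,F_j,g_j,h_j}(f)=\partial_\nu w_j|_\Sigma+\Lambda^0_{q_j}(f)$, where $\Lambda^0_{q_j}(f):=\partial_\nu v_{f,j}|_\Sigma$ is linear in $f$ and $\partial_\nu w_j|_\Sigma$ is independent of $f$. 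Evaluating the hypothesis at $f_0$ and $f_0+\varepsilon\phi$ (both in $\mathcal N$ for small $\varepsilon$) cancels the $f$-independent term, and dividing by $\varepsilon$ yields $\Lambda^0_{q_1}(\phi)=\Lambda^0_{q_2}(\phi)$ for all admissible $\phi$; by linearity this is the full equality of the homogeneous DtN maps.

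Next I would recover $q_1=q_2$. The equality $\Lambda^0_{q_1}=\Lambda^0_{q_2}$ together with Green's (Lagrange) identity produces the orthogonality relation $\int_Q (q_1-q_2)\,v_1\,v_2\,dx\,dt=0$ for solutions $v_1$ of the first homogeneous equation and $v_2$ of the (adjoint of the) second sharing lateral data. Inserting approximate CGO solutions concentrated along null directions and varying the complex parameters recovers the Fourier transform of $q_1-q_2$ over a range of frequencies which, thanks to the compact support of $q_1-q_2$ in $[t_1,t_2]\times\overline\Omega$ and the time margins $T^*>2\operatorname{diam}(\Omega)$, $T^*<t_1<t_2<T-T^*$, suffices to conclude $q_1=q_2=:q$. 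This is the main obstacle: recovery of a general time-dependent potential from the DtN map is open, and it is precisely the compact support in time together with the extra time margins that guarantee every null line meeting the support of $q_1-q_2$ enters and exits through $\Sigma$ within $(0,T)$, so that the boundary data captures enough of the transform to force $q_1=q_2$.

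Finally, set $w:=u_{f,1}-u_{f,2}$ for a fixed $f\in\mathcal N$. Since $q_1=q_2=q$, $w$ solves $(\partial_{tt}-\Delta+q)w=F_1-F_2$ with $w|_\Sigma=0$, and because the DtN maps agree on $\mathcal N$ we also have $\partial_\nu w|_\Sigma=0$. On $[0,t_1)$ both the source and the potential vanish, so $w$ solves the free wave equation with over-determined vanishing Dirichlet and Neumann data on $\Sigma\times(0,t_1)$; the observability inequality, valid because $t_1>T^*>2\operatorname{diam}(\Omega)$, gives $\|g_1-g_2\|_{H^1(\Omega)}^2+\|h_1-h_2\|_{L^2(\Omega)}^2=0$, hence $g_1=g_2$ and $h_1=h_2$, and uniqueness for the free wave equation then yields $w\equiv 0$ on $[0,t_1]\times\Omega$. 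Setting $\psi:=w=u_{f,1}-u_{f,2}\in E^{m+1}$, we obtain $\psi|_\Sigma=\partial_\nu\psi|_\Sigma=0$ and $\psi=0$ on $[0,t_1]\times\Omega$, while by construction $F_1-F_2=(\partial_{tt}-\Delta)\psi+q\psi$, which is exactly the asserted gauge relation $F_1=F_2+\partial_{tt}\psi-\Delta\psi+q\psi$.
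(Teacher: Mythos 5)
Your proposal is correct and follows essentially the same route as the paper: the same affine splitting of the DtN map into an $f$-independent part plus a homogeneous linear part, the same linearization in $\varepsilon$ to equate the homogeneous DtN maps, the same GO/CGO integral identity to get $q_1=q_2$, and the same observability argument for $g_1=g_2$, $h_1=h_2$. Your choice $\psi=u_{f,1}-u_{f,2}$ coincides with the paper's $\psi=\tilde u_1-\tilde u_2$ once $q_1=q_2$ (the homogeneous parts then cancel), so the gauge relation is identical.
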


\begin{theorem}\label{thm_2}
Assume that  $\vec q_j(t,x):=\left(q_{1,j}(t,x),\ldots,q_{n,j}(t,x)\right)\in (E^{m})^n$ with $supp(q_{i,j})\subseteq [t_1,t_2]\times \overline{\Omega}$ for $i=1,2,...,n$ and $j=1,2$, and $F_j \in E^{m}$ with {$\partial_t^k F_j(0,\cdot)\in H_0^{m-k}(\Omega)$} , $g_j \in H_0^{m+1}(\Omega)$, $h_j \in H_0^m(\Omega)$ for $j=1,2$ and $k=0,1,2,\cdots,m-2$. Let $\Lambda_{\vec q_j,F_j,g_j,h_j}$ be the DtN map of the equation 
\begin{equation}\label{eq_nonlinear 2}
\left\{
\begin{aligned}
&\partial_{tt} u_j - \Delta u_j + \sum_{i=1}^{n} q_{i,j} u_j^i = F_j
&& \text{in } Q, \\
&u_j = f
&& \text{on } \Sigma, \\
&u_j =g_j, \quad \partial_t u_j = h_j
&& \text{on }\{t=0\} \times \Omega.
\end{aligned}
\right.
\end{equation}
Suppose that there is an open set $\mathcal N \subset O_{m+1}$ such that 
\begin{align}\label{eq:thm1_DN 2}
\Lambda_{\vec q_1,F_1,g_1,h_1}(f)=\Lambda_{\vec q_{2},F_2,g_2,h_2}(f) \quad \text{for any} \ f \in \mathcal N. 
\end{align}
Assumption(1): $ supp (F_j) \subset [t_1,{T}] \times \Omega$, $q_{n-1,1}=q_{n-1,2}$ in $Q$, either $q_{n,1}\neq0$ or $q_{n,2} \neq 0$ in $Q$. Then there exists $\psi \in E^{m+1}$ with $\psi|_\Sigma=\partial_{\nu} \psi|_\Sigma=0$ and  $\psi=0$ in $[0,t_2] \times \Omega$ such that 
\begin{align*}
&F_1=F_2 , \quad  q_{i,1}=q_{i,2}\quad &&\text{in} \ [t_1,t_2]\times \Omega \ \  \text{for} \ i=1,2, \dots,n, \\
&F_1=F_2+\partial_{tt} \psi-\Delta \psi &&\text{in} \ (t_2,T)\times \Omega,  \\
&g_1=g_2 \quad \text{and} \quad h_1=h_2  \quad \ \ &&\text{in}  \  \Omega.
\end{align*}
Assumption(2):  $F_1=F_2$ in $Q$. Then all the coefficients and initial data are determined:
\begin{align*}
&g_1=g_2 \quad \text{and} \quad h_1=h_2  \quad \ \ &&\text{in}  \  \Omega, \\
& q_{i,1}=q_{i,2} && \text{in} \ [t_1,t_2]\times \Omega \ \  \quad \text{for} \ i=1,2, \dots,n.
\end{align*}

\end{theorem}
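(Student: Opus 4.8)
The plan is to deduce both assertions from the gauge description of Theorem~\ref{thm1.1} and then to exploit each extra hypothesis to force the gauge function to vanish. Fix $f_0\in\mathcal N$, let $u_{0,1},u_{0,2}$ be the associated reference solutions, and let $\psi\in E^{m+1}$ be the gauge function of Theorem~\ref{thm1.1}, realized as the difference of $u_{0,1}$ and $u_{0,2}$ with the sign normalization making \eqref{eq:thm1_result} hold. It satisfies $\psi|_\Sigma=\partial_\nu\psi|_\Sigma=0$, obeys the coefficient identities \eqref{eq:thm1_result}, and obeys the source identity $F_1=F_2-(\partial_{tt}-\Delta)\psi-\sum_{i=1}^{n}q_{i,2}\psi^{i}$ on $[t_1,t_2]\times\Omega$. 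I stress that these relations come from the higher-order linearization of Theorem~\ref{lem_1.3} together with $a_j(t,x,0)=0$, a construction that uses only $\mathrm{supp}(q_{i,j})\subset[t_1,t_2]$; hence I may use them in both cases, in particular under Assumption~(2) where the source-support condition of Theorem~\ref{thm1.1} is dropped. The common first step is the interval $[0,t_1]$: there every $q_{i,j}$ vanishes and the source difference $F_1-F_2$ is zero (because $\mathrm{supp}(F_j)\subset[t_1,T]$ under Assumption~(1), and because $F_1=F_2$ under Assumption~(2)), so $\psi$ solves the homogeneous wave equation with $\psi|_\Sigma=\partial_\nu\psi|_\Sigma=0$. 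Since $t_1>T^*>2\,\mathrm{diam}(\Omega)$, the boundary observability inequality forces the initial energy of $\psi$ to vanish; hence $g_1=g_2$, $h_1=h_2$, $\psi\equiv0$ on $[0,t_1]\times\Omega$, and in particular $\psi(t_1,\cdot)=\partial_t\psi(t_1,\cdot)=0$.

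Under Assumption~(2) I substitute $F_1=F_2$ into the source identity to obtain the semilinear wave equation
\[
(\partial_{tt}-\Delta)\psi+\sum_{i=1}^{n}q_{i,2}\,\psi^{i}=0\qquad\text{on }[t_1,t_2]\times\Omega,
\]
the zeroth-order term being absent thanks to $a_j(t,x,0)=0$. Combined with the vanishing Cauchy data $\psi(t_1,\cdot)=\partial_t\psi(t_1,\cdot)=0$ produced above and the lateral conditions $\psi|_\Sigma=\partial_\nu\psi|_\Sigma=0$, forward uniqueness for this Cauchy problem (the nonlinearity being locally Lipschitz and $\psi\equiv0$ being a solution) yields $\psi\equiv0$ on $[t_1,t_2]\times\Omega$. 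Inserting $\psi=0$ into \eqref{eq:thm1_result} gives $q_{i,1}=q_{i,2}$ on $[t_1,t_2]\times\Omega$ for every $i$, which, with $g_1=g_2$ and $h_1=h_2$, is the complete determination claimed.

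Under Assumption~(1) I instead use the non-degeneracy of the leading coefficient. The $q_{n-1}$-line of \eqref{eq:thm1_result} reads $q_{n-1,1}=q_{n-1,2}+n\,q_{n,2}\,\psi$ on $[t_1,t_2]\times\Omega$, so the hypothesis $q_{n-1,1}=q_{n-1,2}$ forces $q_{n,2}\,\psi=0$ there. As $q_{n,1}=q_{n,2}$ is nonzero on $[t_1,t_2]\times\Omega$, this gives $\psi\equiv0$ on $[t_1,t_2]\times\Omega$, hence on $[0,t_2]\times\Omega$. Substituting $\psi=0$ into \eqref{eq:thm1_result} produces $q_{i,1}=q_{i,2}$ for all $i$ and, through the source identity, $F_1=F_2$ on $[t_1,t_2]\times\Omega$. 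Finally, on $(t_2,T)\times\Omega$ the potentials vanish, so $\psi$ solves the free wave equation $(\partial_{tt}-\Delta)\psi=\pm(F_1-F_2)$, which is exactly the asserted relation between $F_1$ and $F_2$ there; the equalities $g_1=g_2$, $h_1=h_2$ are already in hand.

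The hard part is turning the constraints on $\psi$ into the genuine vanishing $\psi\equiv0$ on $[t_1,t_2]\times\Omega$. In case~(2) this is a uniqueness statement for a semilinear hyperbolic problem issued from the slice $\{t=t_1\}$; because the coefficients $q_{i,2}$ entering the nonlinearity lie only in $E^m$, the energy estimate must be closed in the energy norm, and a standard continuation argument is needed to extend coincidence with the zero solution across the whole of $[t_1,t_2]$. In case~(1) the subtle point is the step $q_{n,2}\psi=0\Rightarrow\psi\equiv0$: it is immediate wherever $q_{n,2}$ is pointwise nonzero, which is precisely the role of the non-degeneracy hypothesis, but if $q_{n,2}$ were allowed to vanish on part of $[t_1,t_2]\times\Omega$ one would have to propagate the vanishing of $\psi$ off the set $\{q_{n,2}\neq0\}$ using the wave equation and the Cauchy data at $t_1$. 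The other delicate ingredient, common to both cases, is the observability step on $[0,t_1]$: it is the long-time condition $t_1>T^*>2\,\mathrm{diam}(\Omega)$ that guarantees recovery of the initial data $g$ and $h$ and the vanishing of the Cauchy data at $t_1$ on which everything else rests.
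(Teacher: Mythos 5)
Your proposal is correct and follows essentially the same route as the paper: observability on $(0,t_1)$ to recover $g,h$, the gauge relations of Theorem \ref{thm1.1} as the starting point, the $q_{n-1}$-identity plus non-degeneracy of $q_{n}$ to kill $\psi$ under Assumption (1), and an energy/Gronwall (forward-uniqueness) argument for the semilinear equation satisfied by $\psi$ under Assumption (2), where the paper runs Gronwall from $t=0$ on all of $Q$ rather than restarting at $t=t_1$. Your explicit remark that the coefficient gauge relations survive the dropping of the source-support hypothesis under Assumption (2) is a point the paper uses implicitly without comment, and is worth keeping.
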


\begin{corollary}[Special cases of nonlinearities] \label{corollary apply}
 Assume that $p_j(t,x),q_j(t,x) \in E^m$, $supp(p_j)$, $supp(q_j) \subseteq [t_1,t_2]\times \overline{\Omega} $ for $j=1,2$, and $F_j \in E^{m}$ with {$\partial_t^k F_j(0,\cdot)\in H_0^{m-k}(\Omega)$} and $supp(F_j) \subset [t_1,t_2] \times \Omega$, $g \in H_0^{m+1}(\Omega)$, $h \in H_0^m(\Omega)$ for $j=1,2$ and $k=0,1,\cdots,m-2$. Additionally suppose that
 \begin{align*}
 \text{Case} \ 1 : & a_j(t,x,z)=q_j \mathrm{e}^{z},
 \ \ \ \ \ \ \text{Case} \ 2 :  a_j(t,x,z)=q_j \sin (z), \\
 \text{Case}  \ 3: & a_j(t,x,z)=q_j z \sin z, 
 \ \text{Case} \  4:  a_j(t,x,z)=p_j \sin z+q_j\mathrm{e} ^{z}, \\
 \text{Case} \ 5 : & a_j(t,x,z)=p_j \sin z+q_j \cos z, \\
 \text{Case} \ 6 : & a_j(t,x,z)=p_j z \mathrm{e}^{z} + \sum_{k=1}^n q_{k,j}z^k,  \ \text{with} \   p_j \neq 0  \ \text {in} \ [t_1,t_2] \times \Omega \ \text{for}  \ j=1,2.
 \end{align*}
 Let $\Lambda_{ a_j,F_j}$ be the DtN map of the equation 
\begin{equation} 
\left\{
\begin{aligned}
&\partial_{tt} u_j - \Delta u_j + a_j(t,x,u_j) = F_j
&& \text{in } Q, \\
&u_j = f
&& \text{on } \Sigma, \\
&u_j =g, \quad \partial_t u_j = h
&& \text{on }\{t=0\} \times \Omega.
\end{aligned}
\right.
\end{equation}
Suppose that there is an open set $\mathcal N \subset O_{m+1}$ such that 
\begin{align}
\Lambda_{a_1,F_1}(f)=\Lambda_{ a_2,F_2}(f) \quad \text{for any} \ f \in \mathcal N. 
\end{align}
Then we have:  \\
Case 1 Gauge symmetry:
\begin{equation*}
q_1=q_2 e^{\psi}  \quad \text{and} \quad F_2-F_1=\partial_{tt} \psi -\Delta \psi \ \text{in} \ [t_1,t_2] \times \Omega.
\end{equation*}
Case 2 Unique determination:
\begin{equation*}
q_1=q_2   \quad \text{and} \quad F_1=F_2 \ \text{in} \ [t_1,t_2] \times \Omega.
\end{equation*}
Case 3  Unique determination:
\begin{equation*}
q_1=q_2   \quad \text{and} \quad F_1=F_2 \ \text{in} \ [t_1,t_2] \times \Omega.
\end{equation*}
Case 4  Unique determination:
\begin{equation*}
p_1=p_2, \ q_1=q_2   \quad \text{and} \quad F_1=F_2 \ \text{in} \ [t_1,t_2] \times \Omega.
\end{equation*}
Case 5 Gauge symmetry:
\begin{equation*}
p_2=p_1 \cos \psi+q_1 \sin \psi, \ q_2=-p_1 \sin \psi + q_1 \cos \psi ,  \quad \text{and} \quad F_2-F_1=\partial_{tt} \psi -\Delta \psi \ \text{in} \ [t_1,t_2] \times \Omega.
\end{equation*}
Case 6  Unique determination:
\begin{equation*}
p_1=p_2, \ q_{k,1}=q_{k,2} \ \text{for} \ k=1,2,\cdots,n   \quad \text{and} \quad F_1=F_2 \ \text{in} \ [t_1,t_2] \times \Omega.
\end{equation*}
\end{corollary}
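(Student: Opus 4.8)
The plan is to reduce all six cases to a single functional identity coming from Theorem~\ref{lem_1.3}, and then to read off each conclusion from how the translation $z\mapsto z+\psi$ acts on the $z$-profile of the nonlinearity; the recurring dichotomy is whether that profile space is translation invariant. \emph{(Step 1: master identity.)} Fix $f_0\in\mathcal N$ with background solutions $u_{0,1},u_{0,2}$. Theorem~\ref{lem_1.3} gives $\partial_z^k a_1(t,x,u_{0,1})=\partial_z^k a_2(t,x,u_{0,2})$ on $[t_1,t_2]\times\Omega$ for every $k$ (for Cases~1,4,5, where $a_j(t,x,0)\neq0$, I first absorb the constant $a_j(t,x,0)$, still supported in $[t_1,t_2]$, into $F_j$, which leaves the derivatives of order $k\geq1$ unchanged). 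Since $z\mapsto a_j(\cdot,\cdot,z)$ is analytic, equating all Taylor coefficients at $u_{0,1}$ and $u_{0,2}$ upgrades this to the pointwise identity
\begin{equation}\label{eq:cor-master}
a_1(t,x,z)=a_2(t,x,z+\psi(t,x))\qquad\text{for all }z\in\mathbb{R},\quad \psi:=u_{0,2}-u_{0,1},
\end{equation}
on $[t_1,t_2]\times\Omega$, any a priori additive constant being killed by comparing the constant-in-$z$ components.

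\emph{(Step 2: the gauge function and the source.)} Next I record that $\psi\in E^{m+1}$ with $\psi|_\Sigma=\partial_\nu\psi|_\Sigma=0$, since $u_{0,1}$ and $u_{0,2}$ share the Dirichlet trace $f_0$ and, by $\Lambda_{a_1,F_1}=\Lambda_{a_2,F_2}$ on $\mathcal N$, the same Neumann trace; and $\psi=0$ on $[0,t_1]\times\Omega$, because there all coefficients and sources vanish, so $u_{0,1},u_{0,2}$ solve the same free wave equation with identical Cauchy and boundary data and coincide by uniqueness. Subtracting the two equations and using \eqref{eq:cor-master} at $z=u_{0,1}$ (so that $a_1(t,x,u_{0,1})=a_2(t,x,u_{0,2})$) yields the source relation $F_2-F_1=(\partial_{tt}-\Delta)\psi$ on $[t_1,t_2]\times\Omega$.

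\emph{(Step 3: case analysis.)} In each case I substitute the explicit $a_j$ into \eqref{eq:cor-master}, expand via the addition formulas, and compare coefficients of the linearly independent $z$-families. When the profile space is translation invariant the shift survives and produces a genuine gauge: $a_j=q_je^z$ gives $q_1=q_2e^\psi$ (Case~1), and $a_j=p_j\sin z+q_j\cos z$ makes the shift act as the rotation $p_2=p_1\cos\psi+q_1\sin\psi$, $q_2=-p_1\sin\psi+q_1\cos\psi$ (Case~5); together with the source relation of Step~2 these are exactly the asserted gauge symmetries, whose sufficiency is the converse construction already used in Theorem~\ref{thm1.1}. Otherwise the shift creates profiles outside the original span, whose coefficients must vanish and thereby constrain $\psi$: $\sin(z+\psi)$ produces a $\cos z$ term, giving $q_2\sin\psi=0$ (Case~2); $z\sin z$ shifted produces an extra $\sin z$ term, which combined with $q_1=q_2\cos\psi$ gives $q_1\psi=0$ (Case~3); the independent $\sin$ and $e^z$ pieces impose their constraints separately (Case~4); and the shift of $p_jze^z$ creates a pure $e^z$ term with coefficient $p_2e^\psi\psi$, which vanishes and, since $p_2\neq0$, forces $\psi\equiv0$ (Case~6). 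Once $\psi\equiv0$ the source relation yields $F_1=F_2$ and all coefficients coincide.

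\textbf{Expected main obstacle.} The delicate step is upgrading the pointwise constraints of Step~3 to the global statement $\psi\equiv0$ (hence $F_1=F_2$ throughout $[t_1,t_2]\times\Omega$) in the uniqueness cases. Identity~\eqref{eq:cor-master} only constrains $\psi$ where the coefficients are active---off their supports both sides vanish and impose nothing---and in the purely periodic Case~2 the constraint $q_2\sin\psi=0$ a priori permits $\psi\in\pi\mathbb{Z}$ rather than $\psi=0$. The feature that makes Case~6 clean is precisely the hypothesis $p_j\neq0$ everywhere, which forces $\psi\equiv0$ pointwise; for Cases~2--4 I would instead propagate $\psi=0$ outward from the slab $t\leq t_1$ (where $\psi$ and $\partial_t\psi$ vanish) using continuity of $\psi$ together with the sign and amplitude information carried by the non-periodic factors, and close the argument on the remaining time interval via the observability and unique-continuation input available because $T-t_2>T^*>2\operatorname{diam}(\Omega)$. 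Reconciling this globalization with the possibility that the coefficients vanish on part of $[t_1,t_2]\times\Omega$ is, I expect, the technical heart of the proof.
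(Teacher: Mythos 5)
Your route is the paper's route: everything is driven by Theorem~\ref{lem_1.3}, i.e.\ by the identities $\partial_z^k a_1(t,x,u_{0,1})=\partial_z^k a_2(t,x,u_{0,2})$, $k\ge 1$, on $[t_1,t_2]\times\Omega$, and each case is settled by reading off the resulting relations between $q_j$, $p_j$ and $\psi=u_{0,2}-u_{0,1}$. Your ``master identity'' $a_1(t,x,z)=a_2(t,x,z+\psi)$ is just the analytic repackaging of all of those identities at once (and your remark about absorbing $a_j(t,x,0)$ into $F_j$ in Cases 1, 4, 5 is a point of care the paper silently skips), and your translation-invariance dichotomy reproduces exactly the computations the paper performs one derivative at a time: Cases 1 and 5 match, Case 6 is the same isolation of the pure $e^z$ coefficient $p_2\psi e^{\psi}$, and Cases 2--4 reduce, as in the paper, to $q_1\cos u_{0,1}=q_2\cos u_{0,2}$ and $q_1\sin u_{0,1}=q_2\sin u_{0,2}$, i.e.\ $q_1=q_2e^{i\psi}$. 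The one place you diverge --- and which you correctly flag as the main obstacle --- is the elimination of $\psi\in\pi\mathbb{Z}\setminus\{0\}$ in Cases 2--4. The paper's device there is boundary determination: since $q_j\cos(u_{0,j})$ is continuous up to $\Sigma$ where $u_{0,j}=f_0$ is known and common to both equations, choosing $f_0$ with $\cos f_0\neq 0$ fixes the sign $e^{i\psi}=+1$, after which continuity of $\psi$ and $\psi=0$ for $t\le t_1$ give $\psi\equiv 0$. Your proposed substitute (propagating $\psi=0$ from the slab $t\le t_1$ using continuity and the discreteness of the constraint) is in the same spirit and plausibly closes the sign issue, but you leave it as a sketch, so this is the one step of the paper's argument your write-up does not actually supply. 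Your final concern --- that the pointwise constraints only control $\psi$ where the coefficients are nonzero, so that $\psi\equiv 0$ and hence $F_1=F_2$ on all of $[t_1,t_2]\times\Omega$ requires a further globalization --- is a genuine issue, but it is one the paper's own proof also glosses over rather than resolves, so you have not lost ground there.
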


%\subsection{\color{red}Stability results}

This paper is organized as follows. In Section \ref{section2}, we establish the local well-posedness for the forward problem. In Section \ref{section3}, we provide the proofs of the main theorems.

\section*{Acknowledgements}
 X Xu is partly supported by National Key Research and Development Program of China (No. 2024YFA1012300), National Natural Science Foundation of China (No. 12525112), and the Open Research Project of Innovation Center of Yangtze River Delta, Zhejiang University. TZ was partially supported by the National Key Research and Development Program of China (No. 2024YFA1012301), the Zhejiang Provincial Basic Public Welfare Research Program [%Youth Original Project, 
Grant Number LDQ24A010001], and NSFC Grant 12371426.

\section{Well-posedness of the forward problem}  \label{section2}
In this section, we establish the well-posedness of the initial boundary value problem \eqref{eq1.1} for the semilinear wave equation. 
\begin{lemma}[Well-posedness for linear equations (see \cite{lin_determining_2024} Lemma 3.2)] \label{lem:lin_wp}
Let $m$ be a positive integer and $m > d+1$. Assume that  $g \in H_0^{m+1}(\Omega)$, $h \in H_0^m(\Omega)$, $f \in O_{m+1}$, $q \in E^m$ and $F \in E^m$ with $\partial_t^kF(0,\cdot) \in H_0^{m-k}(\Omega)$ for $k=0,1,\cdots,m-2$. 
%{\color{red}(Ting: I read reference \cite{lin_determining_2024} Lemma 3.2, for $g \in H_0^{m+1}(\Omega)$, $h \in H_0^m(\Omega)$, $f \in O_{m+1}$, $q \in E^m$ and $F \in E^m$ with $\partial_t^kF(0,\cdot) \in H_0^{m-k}(\Omega)$ for $k=0,1,\cdots,m-2$, the compatibility conditions are automatically satisfied. Therefore, we don't need to add it.)}
% Suppose that $(g,h,f)$ satisfy the compatibility conditions 
% \begin{equation*}
% \left\{
% \begin{aligned}
% &f(0,x)=g(x)  \\
% &\partial_t f(0,x)=h(x)  \\
% &\partial_t^2 f(0,x)=F(0,x)+(\Delta-q)g(x)  \\
% &\partial_t^3 f(0,x)=\partial_t F(0,x)+(\Delta-q) h(x) \\
% &\vdots \\
% &\text{up to the $m$-th order temporal derivative of $f$ at } (0,x) 
% \end{aligned}
% \right.\quad  \text{for } x\in\partial \Omega 
% \end{equation*}
Then the equation
\[\left\{
\begin{aligned}
&\partial_{tt} u - \Delta u + qu = F
&& \text{in } Q, \\
&u = f
&& \text{on } \Sigma, \\
&u = g, \ \partial_t u = h
&& \text{on } \{ t=0 \} \times \Omega.
\end{aligned}
\right.\]
has a unique solution $u\in E^{m+1}$ with $\partial_{\nu} u \in H^{m}(\Sigma)$ satisfying 
\begin{align}\label{eq_linear_wp}
&||u||_{E^{m+1}}+ ||\partial_{\nu}u||_{H^{m}(\Sigma)}\leq \notag \\
&C_{m,T}\left(\sum_{k=0}^m||\partial_t^k F||_{L^1((0,T);H^{m-k}(\Omega))}+||g||_{H^{m+1}(\Omega)}+||h||_{H^m(\Omega)}+||f||_{H^{m+1}(\Sigma)}\right), 
\end{align}
where $C_{m,T}$ is a constant depending on $m$ and $T$. 
\end{lemma}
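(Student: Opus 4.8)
The plan is to prove existence, uniqueness, and the quantitative bound in three stages: homogenize the Dirichlet data, run an energy/Galerkin argument to obtain interior regularity in $E^{m+1}$, and finally extract the sharp Neumann trace regularity by a multiplier identity. First I would homogenize the boundary condition. Given $f \in O_{m+1}$, I would construct a lifting $w \in E^{m+1}$ with $w|_\Sigma = f$ and $\|w\|_{E^{m+1}} \lesssim \|f\|_{H^{m+1}(\Sigma)}$; the role of the compatibility built into $O_{m+1}$ (namely $f \in H_0^{m+1-k}((0,T);H^k(\partial\Omega))$) is precisely that $w$ can be chosen with $\partial_t^j w(0,\cdot)$ vanishing to the order needed at $t=0$. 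Setting $v = u - w$, the function $v$ solves the same equation with homogeneous Dirichlet data, source $\tilde F = F - (\partial_{tt}-\Delta)w - qw \in E^m$, and initial data $\tilde g = g - w(0,\cdot)$, $\tilde h = h - \partial_t w(0,\cdot)$ still lying in $H_0^{m+1}(\Omega), H_0^m(\Omega)$. Since $\partial_\nu w$ and its tangential and time derivatives are controlled in $H^m(\Sigma)$, it suffices to prove the estimate for $v$.

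Second, I would establish interior regularity for $v$ by the standard scheme. Differentiating the equation $k$ times in $t$, the function $v_k := \partial_t^k v$ satisfies a wave equation with homogeneous Dirichlet data and source $\partial_t^k \tilde F - \sum_{j}\binom{k}{j}(\partial_t^j q)\, v_{k-j}$, with Cauchy data at $t=0$ determined recursively from the equation itself, e.g. $\partial_t^2 v(0) = \Delta \tilde g - q(0)\tilde g + \tilde F(0)$, and so on. The hypotheses $\partial_t^k \tilde F(0,\cdot)\in H_0^{m-k}(\Omega)$ together with $\tilde g \in H_0^{m+1}(\Omega)$, $\tilde h \in H_0^m(\Omega)$ guarantee that these Cauchy data lie in the homogeneous spaces for which the $L^2$ energy identity closes. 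A Galerkin approximation in the Dirichlet--Laplacian eigenbasis, multiplication by $\partial_t v_k$, and Gr\"onwall then bound $\sup_t(\|\partial_t v_k\|_{L^2(\Omega)}+\|\nabla v_k\|_{L^2(\Omega)})$; elliptic regularity applied to $-\Delta v = \tilde F - qv - \partial_{tt}v$ trades time-derivative control for full spatial regularity, and summing over $k$ yields $\|v\|_{E^{m+1}} \lesssim \sum_{k=0}^m\|\partial_t^k\tilde F\|_{L^1((0,T);H^{m-k}(\Omega))} + \|\tilde g\|_{H^{m+1}(\Omega)} + \|\tilde h\|_{H^m(\Omega)}$, which is the claimed bound once $w$ is reabsorbed. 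Uniqueness follows from the basic $L^2$ energy estimate applied to the difference of two solutions.

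The step I expect to be the real obstacle is the sharp Neumann trace regularity $\partial_\nu u \in H^m(\Sigma)$. A naive trace theorem on $u\in C([0,T];H^{m+1}(\Omega))$ only gives $\partial_\nu u(\cdot,t)\in H^{m-1/2}(\partial\Omega)$ pointwise in $t$, which is strictly weaker than the claimed space-time $H^m(\Sigma)$ regularity. To recover the full bound I would use the multiplier (Rellich-type) method: take a vector field $\beta\in C^\infty(\overline\Omega;\R^d)$ with $\beta=\nu$ on $\partial\Omega$, multiply the equation for $v$ (and for each $v_k$) by $\beta\cdot\nabla v$, integrate over $Q$, and read off the boundary term $\tfrac12\int_\Sigma|\partial_\nu v|^2$, which is then dominated by the interior energy already controlled. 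Differentiating tangentially and in time and iterating this identity upgrades the $L^2(\Sigma)$ bound to $H^m(\Sigma)$; adding back $\partial_\nu w$ completes the estimate. This hidden-regularity argument is the technical heart of the lemma and is exactly the ingredient that separates the wave-equation theory here from ordinary elliptic well-posedness.
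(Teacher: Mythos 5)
The paper does not prove this lemma at all: it is imported verbatim as Lemma~3.2 of \cite{lin_determining_2024}, so there is no in-paper argument to compare against. Your outline is the standard proof strategy behind results of this type, and you correctly identify all three essential ingredients: the role of the compatibility conditions encoded in $O_{m+1}$, $H_0^{m+1}(\Omega)\times H_0^m(\Omega)$ and $\partial_t^kF(0,\cdot)\in H_0^{m-k}(\Omega)$ (they make the Cauchy data of each time-differentiated problem vanish on $\partial\Omega$ so the energy identities close); the scheme of time-differentiation plus elliptic regularity applied to $-\Delta v=\tilde F-qv-\partial_{tt}v$ to recover spatial derivatives; and the Rellich multiplier $\beta\cdot\nabla v$ for the hidden Neumann-trace regularity, which you rightly single out as the non-elliptic heart of the matter.

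The one step that does not close as written is the reduction to homogeneous boundary data by lifting. If $w\in E^{m+1}$ is a lifting of $f\in H^{m+1}(\Sigma)$ (say the harmonic-in-$x$ extension, so that $w\in\bigcap_k H^k((0,T);H^{m+3/2-k}(\Omega))$), then the modified source $\tilde F=F-(\partial_{tt}-\Delta)w-qw$ contains $\partial_{tt}w$, which only lies in $E^{m-1}$, not $E^m$. Feeding $\tilde F\in E^{m-1}$ into your energy scheme returns $v\in E^{m}$, so $u=v+w\in E^m$: one derivative short of the claimed $u\in E^{m+1}$, and correspondingly $\partial_\nu u$ lands only in $H^{m-1}(\Sigma)$. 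This loss is the well-known reason the sharp nonhomogeneous Dirichlet theory for the wave equation (Lasiecka--Lions--Triggiani) cannot be obtained by lifting alone. The standard repair keeps your other two ingredients intact: either establish the base case (low-regularity boundary data) by transposition against the hidden-regularity estimate you already derive from the Rellich identity, and then climb to $E^{m+1}$ by time-differentiating the boundary data down to that base case and recovering spatial regularity through the elliptic bootstrap; or accept the statement with $f$ one derivative smoother. As it stands, your argument proves a slightly weaker lemma than the one quoted.
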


\begin{theorem}
Assume that $m$ is an integer and $m > d+1$. Assume that $a(t,x,z): \mathbb{R} \times \overline{\Omega} \times \mathbb{R} \to \mathbb{R}$ satisfies $a(t,x,0)=0$ for all $x \in \Omega$ and the map $z \mapsto a(\cdot,\cdot,z)$ is analytic with values in $E^m$.
%Recall that $G_{m+1}$ is the space of all $(g,h,f) \in H_0^{m+1}(\Omega) \times H_0^{m}(\Omega)\times O_{m+1}$ and  
Suppose that for given $(g_0,h_0,f_0)\in G_{m+1}$ and $F\in E^{m}$ satisfies $\partial_t^kF(0,\cdot)\in H_0^{m-k}(\Omega)$ for $k=0,1,\cdots,m-2$ %{\color{red}(Ting: Here it seems that we don't need the compatibility condition.)}
% and the compatibility conditions
% \begin{align}\label{eq:nonlinear_comp_cond}
% \left.
% \begin{cases}
% f_0(x,0) = g_0, \\
% \frac{\partial f_0}{\partial t}(x,0) = g_1, \\
% \frac{\partial^2 f_0}{\partial t^2}(x,0) = F + \Delta g_0 - \displaystyle\sum_{i=1}^n q_i g_0^i, \\
% \frac{\partial^3 f_0}{\partial t^3}(x,0) = \frac{\partial F}{\partial t} + \Delta g_1 - \left( \displaystyle\sum_{i=1}^n i q_i g_0^{i-1} \right) g_1, \\
% \vdots \\
% \text{up to the $m$-th order temporal derivative of $f_0$ at } (x,0)
% \end{cases}
% \ \right.
% \text{on } \partial \Omega
% \end{align}
there exists a unique solution $u_0{\in E^{m+1}}$ for the equation 
\begin{equation}
\left\{
\begin{aligned}
&\partial_{tt}  u_0 - \Delta u_0 + a(t,x,u_0) = F 
&& \text{in } Q,\\
&u_0 = f_0
&& \text{on } \Sigma, \\
&u_0 =g_0, \ \partial_t u_0 = h_0  
&& \text{on } \{t=0\} \times \Omega.
\end{aligned}
\right.
\end{equation}
Then there exists a sufficiently small $\epsilon >0$ and a constant $C>0$ such that for any $(g,h,f)$ in the set
\begin{align*}
U_{\epsilon} = \left\{ (g,h,f) \in G_{m+1} \,\middle|\, 
%\begin{aligned}
%& (g,h,f) \text{ satisfy the compatibility conditions \eqref{eq:nonlinear_comp_cond}} \\
%&\text{and } 
\|(g-g_0, h-h_0, f-f_0)\|_{G_{m+1}} < \epsilon
%\end{aligned}
\right\},
\end{align*}
there exists a unique solution $u \in E^{m+1}$ with $\partial_{\nu}u  \in H^{m}(\Sigma)$ satisfying the equation
\[\left\{
\begin{aligned}
& \partial_{tt} u - \Delta u + a(t,x,u) = F 
&& \text{in } Q, \\
& u = f 
&& \text{on } \Sigma, \\
& u =g ,\ \partial_t u = h 
&& \text{on } \{t=0\} \times \Omega.
\end{aligned}
\right.
\]
Moreover, we have
\begin{align}\label{eq:tilde_u_est}
\|u-u_0\|_{E^{m+1}}+\|\partial_{\nu} (u-u_0)\|_{H^{m}(\Sigma)} \leq  C \|(g-g_0, h-h_0, f-f_0)\|_{G_{m+1}}.
\end{align}
\end{theorem}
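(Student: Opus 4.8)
The plan is to reduce the nonlinear problem to a fixed-point equation for the difference $\tilde u = u - u_0$ and to solve it by Banach's fixed-point theorem, using the linear well-posedness of Lemma \ref{lem:lin_wp} at each iteration. First I would linearize the nonlinearity at $u_0$. Since $z \mapsto a(\cdot,\cdot,z)$ is analytic with values in the Banach algebra $E^m$, the Nemytskii (composition) operator $w \mapsto a(\cdot,\cdot,u_0+w)$ is real-analytic from a neighborhood of $0$ in $E^{m+1}$ into $E^m$, and its first-order Taylor expansion at $u_0$ reads $a(\cdot,\cdot,u_0+w) = a(\cdot,\cdot,u_0) + q\,w - G(w)$, where $q := \partial_z a(\cdot,\cdot,u_0) \in E^m$ is the linearized potential and $G(w)$ collects the quadratic-and-higher terms. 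Subtracting the equation for $u_0$, the difference $\tilde u$ must solve the linear wave equation with potential $q$, source $G(\tilde u)$, boundary value $f-f_0$, and initial data $(g-g_0, h-h_0)$.

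Next I set up the iteration. Fix $\rho > 0$ and let $B_\rho = \{ w \in E^{m+1} : \|w\|_{E^{m+1}} < \rho \}$. Given $w \in B_\rho$, define $\Phi(w) = v$ to be the solution, provided by Lemma \ref{lem:lin_wp}, of $(\partial_{tt}-\Delta)v + q v = G(w)$ with boundary value $f - f_0$ and initial data $(g-g_0, h-h_0)$. To apply the lemma I must check that $G(w) \in E^m$ and satisfies the compatibility conditions $\partial_t^k G(w)(0,\cdot) \in H_0^{m-k}(\Omega)$ for $k = 0, \dots, m-2$; these follow from $a(t,x,0)=0$, the membership $q \in E^m$, and the vanishing of the relevant traces of $u_0$ and $w$ at the corner $\{t=0\}\times\partial\Omega$ encoded in $f_0 \in O_{m+1}$ and $w \in E^{m+1}$.

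The heart of the argument is the pair of nonlinear estimates. Using that $E^m$ is an algebra and that the Taylor remainder of the analytic map is of second order near $u_0$, I expect bounds of the form $\|G(w)\|_{E^m} \le C\,\|w\|_{E^m}^2$ and $\|G(w_1)-G(w_2)\|_{E^m} \le C\rho\,\|w_1-w_2\|_{E^m}$ for $w, w_1, w_2 \in B_\rho$, with $C$ uniform for $\rho$ small. Feeding the first estimate into Lemma \ref{lem:lin_wp} gives $\|v\|_{E^{m+1}} + \|\partial_\nu v\|_{H^m(\Sigma)} \le C(\epsilon + \rho^2)$, so choosing $\epsilon = \rho/(2C)$ and $\rho$ small forces $\Phi(B_\rho) \subseteq B_\rho$. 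The difference $v_1 - v_2 = \Phi(w_1)-\Phi(w_2)$ solves the same linear equation with zero data and source $G(w_1)-G(w_2)$, so the second estimate and Lemma \ref{lem:lin_wp} yield $\|\Phi(w_1)-\Phi(w_2)\|_{E^{m+1}} \le C\rho\,\|w_1-w_2\|_{E^{m+1}}$, a contraction for $\rho$ small. Banach's theorem then produces a unique fixed point $\tilde u \in B_\rho$, i.e.\ a unique $u = u_0 + \tilde u \in E^{m+1}$ solving the nonlinear problem, and the estimate \eqref{eq:tilde_u_est} follows by applying Lemma \ref{lem:lin_wp} once more to $\tilde u$ and absorbing the quadratic source into the left-hand side for $\rho$ small.

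The main obstacle I anticipate is justifying the quadratic remainder and Lipschitz estimates for $G$ in the $E^m$ norm. For a polynomial nonlinearity these reduce to a finite sum handled directly by the algebra inequality $\|uv\|_{E^m} \le C_m \|u\|_{E^m}\|v\|_{E^m}$; for a genuinely analytic $a$ one must control the convergence of the composed power series in $E^m$ uniformly on a neighborhood of $u_0$. The subtlety is that $u_0$ itself need not be small, so the relevant radius of convergence must be taken around the base point $u_0$ rather than around $0$; this is exactly where analyticity of $z \mapsto a(\cdot,\cdot,z)$ with values in $E^m$, combined with the Banach algebra structure, is used to guarantee that the Nemytskii operator is analytic near $u_0$ and that its second-order remainder obeys the required bounds.
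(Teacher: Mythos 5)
Your proposal follows essentially the same route as the paper's proof: the same decomposition $\tilde u = u - u_0$ with $G(w) = -\bigl(a(\cdot,\cdot,u_0+w) - a(\cdot,\cdot,u_0) - \partial_z a(\cdot,\cdot,u_0)w\bigr)$, the same quadratic-remainder and Lipschitz estimates in $E^m$ via the Banach-algebra structure and Taylor expansion at $u_0$, the same choice $\epsilon = \rho/(2C)$, and the same contraction/Banach fixed-point conclusion using the linear well-posedness lemma. The subtlety you flag about expanding around the (not necessarily small) base point $u_0$ is exactly how the paper handles it, via Taylor's theorem with remainder evaluated at $u_0 + \theta w$.
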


\begin{proof}
Let $\tilde u=u-u_0$ %where $\tilde u$ satisfies the equation 
%\begin{align*}
%& \partial_{tt} \tilde{u} - \Delta \tilde{u} = F - (\partial_{tt} - %\Delta) u_0 - \sum_{i=1}^n q_i (\tilde{u} + u_0)^i 
%&& \text{in } \Omega \times (0,T), \\
%& \tilde{u} = f-f_0 
%&& \text{on } \partial \Omega \times (0,T), \tag{2.6} \\
%& \tilde{u} =g-g_0,\  \partial_t \tilde{u} = h-h_0 
%&& \text{on } \{ t = 0 \}.
%\end{align*}
and denote
\begin{equation*}
G(w,u_0):=-\left( a(t,x,u_0+w)-a(t,x,u_0)-\partial_z a(t,x,u_0) w \right).
%=-\sum \limits_{i=2}^{n}\sum \limits_{l=2}^{i} q_i \binom{i}{l} w^{l} u_0^{i-l}.
\end{equation*}
Then $\tilde u$ should solve
\begin{equation}\label{eq:tilde_u}
\left\{
\begin{aligned}
& \partial_{tt} \tilde{u} - \Delta \tilde{u} +\partial_za(t,x,u_0) \tilde u= G(\tilde u, u_0) 
&& \text{in } Q,\\
& \tilde{u} = f-f_0
&& \text{on } \Sigma, \\
& \tilde{u} =g-g_0, \ \partial_t \tilde{u} = h-h_0
&& \text{on } \{t=0\}\times\Omega.
\end{aligned}
\right.
\end{equation}

Given $w\in B_\rho:=\left\{ u\in E^{m+1}| \  \ ||u||_{E^{m+1} }< \rho \right \}$, when  $u_0\in E^{m+1}$, since $E^m$ is an algebra and $a(t,x,z)$ is analytic with $z$ , one can show that for $\rho$ small enough,  by Taylor's theorem, there exist $\theta \in (0,1)$ such that
\begin{align}\label{eq：G-norm}
    \|G(w,u_0)\|_{E^m}&= \|a(t,x,u_0+w)-a(t,x,u_0)-\partial_z a(t,x,u_0) w  \|_{E^m} \notag\\
    &= \left \|\frac{\partial_{z}^2 a(t,x,u_0+\theta w )}{2} w^2\right \|_{E^m}  \notag\\
    &\leq C  \|w^2\|_{E^m} \leq C \|w\|_{E^{m+1}}^2,
\end{align}
and 
\begin{align} \label{partialw}
\| \partial_wG(w,u_0) \|_{E^m}=\left\| \sum_{i=2}^{\infty}\sum_{l=2}^{i} \frac{\partial_z^i a(t,x,u_0)}{i!} \binom{i}{l} l  w^{l-1} u_0^{i-l} \right\|_{E^m} \leq C \|w \|_{E^m}.
\end{align}
Therefore, $G(w,u_0)\in E^m$. Also, %by the compatibility condition on $f_0$,
since $f_0\in O_{m+1}$,
\[
\partial_t^k G(w,u_0)|_{t=0} =0 \quad \text{on } \partial \Omega,
\]
then $\partial_t^k G(w,u_0) \in H_0^{m-k}(\Omega)$ for $k=0,1,\dots,m-2$. Moreover, from \ref{eq：G-norm} we have
\begin{align}\label{eq:G_est}
    \sum_{k=0}^m\|\partial_t^k G(w, u_0)\|_{L^1((0,T);H^{m-k}(\Omega))} &\leq C_{m,T}T \sum_{k=0}^m\|\partial_t^k G(w, u_0)\|_{C((0,T);H^{m-k}(\Omega))} \notag\\
    &= C_{m,T}T\|G(w,u_0)\|_{E^m} \leq C \|w\|_{E^{m+1}}^2 \leq C \rho^2.
\end{align}
Then by Lemma \ref{lem:lin_wp}, the equation
\begin{equation}\label{eq:v_w}
\left\{
\begin{aligned}
& \partial_{tt} v - \Delta v +\partial_za(t,x,u_0) v= G(w, u_0) 
&& \text{in } Q, \\
& v = f-f_0
&& \text{on } \Sigma, \\
& v =g-g_0, \ \partial_t v = h-h_0
&& \text{on } \{t=0\}\times\Omega,
\end{aligned}
\right.
\end{equation}
admits a unique solution $v\in E^{m+1}$ satisfying
\begin{equation}\label{eq:v_norm}
\begin{aligned}
    \|v\|_{E^{m+1}} &+\|\partial_{\nu} v\|_{H^{m}(\Sigma)} \\ 
    &\leq C\left(\epsilon + \sum_{k=0}^m\|\partial_t^k G(w, u_0)\|_{L^1((0,T);H^{m-k}(\Omega))}\right) \leq C(\epsilon+\rho^2).
\end{aligned}
\end{equation}
For $\epsilon=\frac{1}{2C}\rho$ and $\rho$ small enough, one has $v\in B_\rho$.

Next let $w_j\in B_\rho$ for $j=1,2$ and $v_j$ be the solution to \eqref{eq:v_w} with $w$ replaced by $w_j$. Then we have their difference $\tilde v:=v_1-v_2$ satisfies
\begin{equation*}
\left\{
\begin{aligned}
& \partial_{tt} \tilde v - \Delta \tilde v + \partial_za(t,x,u_0) \tilde v= G(w_1, u_0)-G(w_2, u_0)
&& \text{in } Q, \\
& \tilde v = 0
&& \text{on } \Sigma, \\
& \tilde v =\partial_t \tilde v = 0
&& \text{on } \{t=0\}\times\Omega.
\end{aligned}
\right.
\end{equation*}
Therefore by using \eqref{partialw} we obtain
\begin{align*}
    \|\tilde v\|_{E^{m+1}} &\leq C_{m,T}\sum_{k=0}^m\|\partial_t^kG(w_1,u_0)-\partial_t^kG(w_2, u_0)\|_{L^1((0,T);H^{m-k}(\Omega))}\\
    &\leq C_{m,T} T \|G(w_1,u_0)-G(w_2,u_0)\|_{E^m}\\
    & \leq C \| \partial_wG(sw_1+(1-s)w_2,u_0) \|_{E^m}  \| w_1 - w_2 \|_{E^m}\\
    &\leq C \| s w_1+(1-s)w_2 \|_{E^m}  \| w_1 - w_2 \|_{E^m}\leq C\rho \|w_1-w_2\|_{E^{m+1}}.
\end{align*}
For $\rho$ small enough, this implies the map from $w$ to $v$ is a contraction on $B_\rho$. By the Banach fixed-point theorem, there exists a unique fixed point $\tilde u$ which is the solution to \eqref{eq:tilde_u}. Moreover, by \eqref{eq:G_est}, for $\rho$ small enough, one obtains \eqref{eq:tilde_u_est}.
\end{proof}

\begin{remark}
It is not hard to see that the solution map $(g,h,f)\,\mapsto\,u$ is $C^\infty$ Fr\'echet differentiable.
\end{remark}

\section {Uniqueness result} \label{section3}
In this section, we study the inverse problem on determining nonlinearities, source and initial data of semilinear  wave equation by DtN map. Driven by the need to determine initial data, we recall an observability inequality for the following wave equation: 
\begin{equation}\label{eq3.1}
\left\{
\begin{aligned}
&(\partial_{tt}-\Delta)u +a(t,x) u  = 0 && \text{in } Q, \\
&u = 0 && \text{on } \Sigma, \\
&u =g,\quad \partial_t u = h && \text{on } \left\{t=0\right\}\times\Omega. 
\end{aligned}
\right.
\end{equation}
where $a \in L^{\infty}\left((0,T);L^p(\Omega)\right)$ with $p \geq d$ and $(g,h)\in H^1_0(\Omega)\times L^2(\Omega)$. \\
Similar to  \cite{duyckaerts_optimality_2008}, \cite{fu_sharp_2023} and \cite{lu_observability_2013}, one has the following result:
\begin{lemma} \label{Lemma3.1}
For any $T> T^*$, any solution $u \in C\left([0,T];H^1_0(\Omega)\right) \bigcap C^1\left([0,T]; L^2(\Omega)\right)$ to \eqref{eq3.1} satisfies
\begin{equation}
||g||_{H_0^1(\Omega)}+||h||_{L^2(\Omega)} \leq \mathrm{e}^{C\left(1+||a||_{L^{\infty}\left((0,T);L^p(\Omega)\right)}^{\frac{1}{\frac{3}{2}-\frac{d}{p}}}\right)} ||\partial_{\nu}u||_{L^2(\Sigma)}.
\end{equation}
\end{lemma}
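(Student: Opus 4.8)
The plan is to derive this observability inequality from a global Carleman estimate for the wave operator, treating the zeroth-order term $au$ as a right-hand side and carefully tracking how the constant depends on $\|a\|_{L^\infty((0,T);L^p(\Omega))}$. The starting point is the free case $a\equiv 0$: for $\partial_{tt}-\Delta$ with homogeneous Dirichlet data and observation on the whole boundary, the multiplier method (using the radial multiplier $(x-x_0)\cdot\nabla u$ together with the energy and $u\,\partial_t u$ multipliers) yields, for every $T>T^*>2\operatorname{diam}(\Omega)$, an estimate $\|g\|_{H^1_0(\Omega)}^2+\|h\|_{L^2(\Omega)}^2\le C_0\|\partial_\nu u\|_{L^2(\Sigma)}^2$. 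The threshold $2\operatorname{diam}(\Omega)$ is exactly the geometric (pseudoconvexity) condition making the boundary term have a favorable sign, so this step fixes the geometry and the observation time.

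To bring in the potential, I would invoke a global Carleman estimate with a weight of the form $\varphi(t,x)=e^{\lambda(|x-x_0|^2-\beta(t-T/2)^2)}$ and large parameter $s$, whose pseudoconvexity is again guaranteed by $T>2\operatorname{diam}(\Omega)$. Schematically it controls $s^{3}\|e^{s\varphi}u\|_{L^2(Q)}^2+s\|e^{s\varphi}\nabla_{t,x}u\|_{L^2(Q)}^2$ by $\|e^{s\varphi}(\partial_{tt}-\Delta)u\|_{L^2(Q)}^2$ plus the boundary integral $s\int_\Sigma e^{2s\varphi}|\partial_\nu u|^2$ (the tangential derivatives drop because $u|_\Sigma=0$), together with endpoint contributions at $t=0,T$. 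Substituting $(\partial_{tt}-\Delta)u=-au$, the whole matter reduces to absorbing $\|e^{s\varphi}au\|_{L^2(Q)}^2$ into the left-hand side.

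This absorption is where the exponent is born, and I expect it to be the main obstacle. By Hölder in $x$ against $\|a(t)\|_{L^p}$, the Sobolev embedding $H^{d/p}(\Omega)\hookrightarrow L^{2p/(p-2)}(\Omega)$ (legitimate precisely because $p\ge d$, so $d/p\le 1$), and interpolation between $L^2$ and $H^1$, one bounds $\|e^{s\varphi}au\|_{L^2(Q)}^2$ by $\|a\|_{L^\infty L^p}^2\,\|e^{s\varphi}u\|_{L^2}^{2(1-d/p)}\bigl(\|e^{s\varphi}\nabla u\|_{L^2}+s\|e^{s\varphi}u\|_{L^2}\bigr)^{2d/p}$. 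Applying Young's inequality to split this against the two left-hand terms $s^{3}\|e^{s\varphi}u\|^2$ and $s\|e^{s\varphi}\nabla u\|^2$, a short optimization of the Young exponents (with conjugate pair $\tfrac{p}{p-d},\tfrac{p}{d}$) shows that the term is fully absorbed as soon as $s\gtrsim \|a\|_{L^\infty L^p}^{1/(3/2-d/p)}$; this is exactly the power appearing in the statement. I would then fix $s=s_0\bigl(1+\|a\|_{L^\infty L^p}^{1/(3/2-d/p)}\bigr)$, leaving a clean estimate $\|e^{s\varphi}u\|+\|e^{s\varphi}\nabla_{t,x}u\|\lesssim e^{Cs}\|\partial_\nu u\|_{L^2(\Sigma)}$.

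Finally I would convert back to unweighted norms, using that $e^{s\varphi}$ is pinched between $e^{\pm Cs}$ on the relevant subregion, and relate the weighted interior quantity to the initial data: a standard energy estimate gives $\|g\|_{H^1_0}^2+\|h\|_{L^2}^2\lesssim \tfrac1{T_1}\int_I E(t)\,dt$ on a suitable subinterval $I$, and combining this with the Carleman bound produces the factor $e^{C(1+\|a\|^{1/(3/2-d/p)})}$; taking square roots yields the claimed inequality. Since $u$ only belongs to $C([0,T];H^1_0(\Omega))\cap C^1([0,T];L^2(\Omega))$, each of these steps must be justified by approximating with smoother solutions and passing to the limit. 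The delicate point throughout is to carry out the Hölder–Sobolev–interpolation chain and tune the Carleman parameter against $\|a\|$ so that precisely the exponent $1/(3/2-d/p)$, and nothing worse, emerges, while keeping every boundary and endpoint term under control.
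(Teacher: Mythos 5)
Your proposal is correct and follows the same route as the paper, which does not prove Lemma \ref{Lemma3.1} itself but attributes it to \cite{duyckaerts_optimality_2008,fu_sharp_2023,lu_observability_2013}; those works establish exactly this estimate via a global Carleman inequality for the wave operator with the large parameter chosen of size $\|a\|_{L^{\infty}((0,T);L^p(\Omega))}^{1/(3/2-d/p)}$ to absorb the potential term through the same H\"older--Sobolev--interpolation--Young chain you describe. Your exponent bookkeeping (conjugate pair $\tfrac{p}{p-d},\tfrac{p}{d}$ against the weights $s^3\|e^{s\varphi}u\|^2+s\|e^{s\varphi}\nabla u\|^2$) is the correct mechanism producing $1/(3/2-d/p)$.
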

Then we recall an approximation property:
\begin{lemma}[Similar to Theorem 5.1 in \cite{lin_determining_2024}]\label{lem:approx}
Assume  that $T>2T^*$, $t_1$ and $t_2$ are two constants satisfying $T^*<t_1<t_2<T-T^*$  and $q\in E^{m+1}$ with $\textrm{supp}\  q\subseteq [t_1, t_2]\times\overline\Omega$. Then for any solution $v\in C([t_1,t_2];L^2(\Omega))\cap C^1([t_1,t_2];H^{-1}(\Omega))$ to 
\[\partial_{tt}v-\Delta v+  q v=0\qquad \textrm{ in }Q,\]
and any $\varepsilon>0$, there exists a solution $V\in C^2(\overline Q)$ to 
\begin{equation}\label{eq:appV}
\left\{
\begin{aligned}
&\partial_{tt}V-\Delta V+ q V=0\qquad&&\textrm{in }Q,\\
&V(x,0)=\partial_tV(x,0)=0  \quad \text{or } \ V(x,T)=\partial_tV(x,T)=0&&\textrm{in }\Omega,
\end{aligned}
\right.
\end{equation}
such that 
\[\|V-v\|_{L^2((t_1,t_2)\times\Omega)}<\varepsilon.\]
\end{lemma}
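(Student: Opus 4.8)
The plan is to prove this as a Runge-type approximation theorem by duality, reducing the density statement to a unique-continuation/observability fact that is exactly the content of Lemma \ref{Lemma3.1}. Write $P=\partial_{tt}-\Delta+q$, which is formally self-adjoint since $q$ is real, and set
\[
\mathcal S=\{\,V|_{(t_1,t_2)\times\Omega}\,:\,V\in C^2(\overline Q),\ PV=0\text{ in }Q,\ V(\cdot,0)=\partial_tV(\cdot,0)=0\,\}.
\]
Since $\mathcal S$ is a linear subspace of $L^2((t_1,t_2)\times\Omega)$ and the target topology is that of $L^2((t_1,t_2)\times\Omega)$, the Hahn--Banach theorem reduces the claim $v\in\overline{\mathcal S}$ to showing that every $f\in L^2((t_1,t_2)\times\Omega)$ annihilating $\mathcal S$ also satisfies $\langle f,v\rangle_{(t_1,t_2)\times\Omega}=0$ for the given local solution $v$. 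Thus I fix such an $f$, extend it by zero to $Q$, and must establish $\int_{(t_1,t_2)\times\Omega}f\,v\,dx\,dt=0$.

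Next I introduce the adjoint variable: let $w$ be the energy (transposition) solution of the backward problem
\[
Pw=f\ \text{ in }Q,\qquad w|_\Sigma=0,\qquad w(\cdot,T)=\partial_t w(\cdot,T)=0,
\]
which is well posed because the wave equation is time-reversible, and for which the hidden-regularity theory gives $\partial_\nu w|_\Sigma\in L^2(\Sigma)$. For any $V\in\mathcal S$, Green's formula on $Q$ together with $PV=0$, the vanishing of $(V,\partial_tV)$ at $t=0$, the vanishing of $(w,\partial_t w)$ at $t=T$, and $w|_\Sigma=0$ collapses all boundary contributions to a single lateral term:
\[
0=\langle f,V\rangle_{(t_1,t_2)\times\Omega}=\int_Q \bigl((Pw)V-w(PV)\bigr)\,dx\,dt=-\int_\Sigma (\partial_\nu w)\,V|_\Sigma\,dS\,dt.
\]
As $V$ runs through $\mathcal S$ its Dirichlet traces $V|_\Sigma$ fill a dense subset of $L^2(\Sigma)$ (every sufficiently regular boundary datum, paired with zero initial data, produces an admissible $V$), so this forces $\partial_\nu w|_\Sigma=0$. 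Hence $w$ has \emph{vanishing Cauchy data on the whole lateral boundary}.

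Now I propagate these zeros inward from the two time-buffers. On $(t_2,T)\times\Omega$ one has $q\equiv0$ and $f\equiv0$, so $w$ solves the free wave equation with $w(\cdot,T)=\partial_t w(\cdot,T)=0$ and $w|_\Sigma=0$; uniqueness gives $w\equiv0$ there, whence $w(\cdot,t_2)=\partial_t w(\cdot,t_2)=0$. On $(0,t_1)\times\Omega$ again $q\equiv0$, $f\equiv0$, so $w$ solves the homogeneous wave equation with $w|_\Sigma=0$ and, crucially, $\partial_\nu w|_\Sigma=0$; since $t_1>T^*$, the observability inequality of Lemma \ref{Lemma3.1} (applied on $(0,t_1)$ with $a\equiv0$) bounds $\|w(\cdot,0)\|_{H^1_0(\Omega)}+\|\partial_t w(\cdot,0)\|_{L^2(\Omega)}$ by $C\|\partial_\nu w\|_{L^2((0,t_1)\times\partial\Omega)}=0$, giving $w(\cdot,0)=\partial_t w(\cdot,0)=0$ and hence $w\equiv0$ on $(0,t_1)\times\Omega$, so $w(\cdot,t_1)=\partial_t w(\cdot,t_1)=0$. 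Applying Green's identity to $w$ and $v$ on the slab $(t_1,t_2)\times\Omega$, every boundary term now vanishes — the temporal ones at $t_1,t_2$ because $(w,\partial_t w)$ vanishes there, the lateral one because $w|_\Sigma=\partial_\nu w|_\Sigma=0$ — leaving $\langle f,v\rangle=\int_{(t_1,t_2)\times\Omega}\bigl((Pw)v-w(Pv)\bigr)\,dx\,dt=0$, exactly as required.

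I expect the main obstacle to be analytic rather than conceptual: the datum $v$ lies only in $C([t_1,t_2];L^2(\Omega))\cap C^1([t_1,t_2];H^{-1}(\Omega))$, so the Green identities must be justified at low regularity. I would handle this by approximating $v$ by smoother solutions and by reading the pairings through the $H^1_0$–$H^{-1}$ energy duality, securing the boundary manipulations and $\partial_\nu w\in L^2(\Sigma)$ from the standard hidden-regularity theory. To obtain $V\in C^2(\overline Q)$ as stated, I restrict the boundary data to a dense class of smooth, compatible functions; Lemma \ref{lem:lin_wp} then yields $V\in E^{m+1}$, and since $m>d+1$ the embedding $E^{m+1}\hookrightarrow C^2(\overline Q)$ holds, so the smooth $V$ still form a dense family and the bound $\|V-v\|_{L^2((t_1,t_2)\times\Omega)}<\varepsilon$ is realized within $C^2(\overline Q)$. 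Finally, the second alternative in \eqref{eq:appV}, with $V(\cdot,T)=\partial_t V(\cdot,T)=0$, follows by the identical argument after reversing time: one builds $w$ with $w(\cdot,0)=\partial_t w(\cdot,0)=0$, uses plain uniqueness on $(0,t_1)$, and invokes Lemma \ref{Lemma3.1} on $(t_2,T)$, whose length $T-t_2>T^*$ kills the Cauchy data at $t_2$.
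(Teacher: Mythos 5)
Your proposal is correct and follows essentially the same route as the paper: a Hahn--Banach duality reduction, an auxiliary solution driven by the annihilating functional $f$ whose Neumann trace is shown to vanish by pairing against the dense family of boundary traces of $V$, propagation of zero Cauchy data into the buffer intervals $(0,t_1)$ and $(t_2,T)$ via uniqueness on one side and the observability inequality of Lemma \ref{Lemma3.1} on the other, and a final Green identity on the slab $(t_1,t_2)\times\Omega$. The only difference is cosmetic: you treat the alternative with data vanishing at $t=0$ and run the auxiliary problem backward from $t=T$, while the paper does the time-reversed mirror image.
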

\begin{proof}

We consider the case where $V(x,T)=\partial_tV(x,T)=0$ in $\Omega$. Similar to the proof of Theorem 5.1 in \cite{lin_determining_2024}, we aim to show
\begin{align*}
    X=\left\{w =V|_{(t_1,t_2)\times \Omega}\ \middle| \ V \in C^2\left(\overline{Q}\right) \textrm{ is a solution to the second case of }\eqref{eq:appV} \right\}
\end{align*}
is dense in
\begin{align*}
    Y=\left\{ v \in C\left([t_1,t_2];L^2(\Omega)\right) \cap C^1\left([t_1,t_2];H^{-1}(\Omega)\right)\ \middle| \ \partial_{tt}v-\Delta v+  q v=0 \textrm{ in } (t_1,t_2)\times \Omega\right\}
\end{align*}
in terms of $L^2\big(\Omega \times (t_1,t_2)\big)$. By the Hahn-Banach theorem, it suffices to verify: If $f \in L^2\left((t_1,t_2)\times \Omega \right)$ satisfies
\begin{equation}\label{eq:w}
    \int_{t_1}^{t_2} \int_\Omega fw dxdt =0\quad \forall w\in X,
\end{equation}
    then
\begin{equation}\label{eq:v}
    \int_{t_1}^{t_2} \int_\Omega fv dxdt =0\quad \forall v\in Y.
\end{equation}
To this aim, let $f$ satisfy \eqref{eq:w} and set
\begin{equation*}
\tilde{f}(x,t)=\left\{
\begin{aligned}
&f(x,t) &&\text{in } (t_1,t_2)\times \Omega, \\
& 0 && \text{in } \left((0,t_1] \cup [t_2,T)\right) \times \Omega.
\end{aligned}
\right.
\end{equation*}
Assume $\tilde{v} \in H_0$ solve the wave equation:
\begin{equation}\label{eq:tilde{v}}
\left\{
    \begin{aligned}
    &(\partial_{tt} - \Delta) \tilde{v} + q \tilde{v} = \tilde{f} &&\text{in } Q, \\
&\tilde{v} = 0 && \text{on } \Sigma, \\
&\tilde{v} =\partial_t \tilde{v} = 0 && \text{on } \{t=0\}\times\Omega.
    \end{aligned}
\right.    
\end{equation}
Then for any solution $V \in C^2\left(\overline{Q}\right)$ to \eqref{eq:appV} and $w=V|_{(t_1,t_2)\times \Omega}$, 
\begin{equation*}
\begin{aligned}
     0 &= \int_{t_1}^{t_2}\int_{\Omega}fw dxdt =\int_Q \tilde{f}V dxdt \\
    &=\int_Q (\partial_{tt} \tilde{v} - \Delta \tilde{v} + q \tilde{v}) V = \int_{\Sigma} \partial_{\nu} \tilde{v} \cdot V dSdt.
\end{aligned}
\end{equation*}
Since $V|_{\Sigma}$ is arbitrary in $C_0^{\infty}\left(0,T;C^{\infty}(\partial \Omega)\right)$ and the solution $V$ to \eqref{eq:appV} satisfies $V \in E^{m+2}$, so $\partial_{\nu}\tilde{v}=0$ on $\Sigma$. Thus, in the domain $\left((0,t_1] \cup [t_2,T)\right) \times \Omega$, the solution $\tilde{v} \in H_0$ to \eqref{eq:tilde{v}} satisfies
\begin{equation*}
\left\{
\begin{aligned}
&(\partial_{tt} - \Delta) \tilde{v} + q \tilde{v} = 0 &&\text{in } \left((0,t_1] \cup [t_2,T)\right) \times \Omega, \\
&\tilde{v} = \partial_\nu \tilde{v}=0 && \text{on } \Sigma, \\
&\tilde{v} =\partial_t \tilde{v} = 0 && \text{on } \{t=0\}\times\Omega.
\end{aligned}
\right.
\end{equation*}
By the uniqueness of solutions to wave equations, we have
\begin{equation*}
    \tilde{v} \equiv 0 \quad \textrm{in } (0,t_1)\times \Omega.
\end{equation*}
By the observability result in Lemma \ref{Lemma3.1}, we have
\begin{equation*}
    \tilde{v} \equiv 0 \quad \textrm{in } (t_2,T)\times \Omega.
\end{equation*}
Hence,
\begin{equation*}
\left\{
\begin{aligned}
&\tilde{v}(\cdot,t_1) = \partial_t \tilde{v}(\cdot,t_1)=\tilde{v}(\cdot,t_2) = \partial_t \tilde{v}(\cdot,t_2)  &&\text{in } \Omega, \\
&\tilde{v}=\partial_{\nu}\tilde{v} =0 && \text{on } \Sigma.
\end{aligned}
\right.
\end{equation*}
It follows that
\begin{equation*}
    \int_{t_1}^{t_2}\int_{\Omega} fv dxdt= \int_{t_1}^{t_2}\int_{\Omega} (\partial_{tt} \tilde{v} - \Delta \tilde{v} + q \tilde{v})v dxdt =0
\end{equation*}
for any $v\in Y$ as desired.

\begin{comment}
    The proof of the case where $V(x,T)=\partial_tV(x,T)=0$ in $\Omega$ is similar to that of Theorem 5.1 in \cite{lin_determining_2024}. Differently, we denote by $\tilde{v} \in H_0$ is the solution to 
\begin{equation*}
\left\{
\begin{aligned}
&(\partial_{tt} - \Delta) \tilde{v} + q \tilde{v} = \tilde{f}, &&\text{in } Q, \\
&\tilde{v} = 0, && \text{on } \Sigma, \\
&\tilde{v}(x,0) =\partial_t \tilde{v}(x,0) = 0, && \text{in } \Omega.
\end{aligned}
\right.
\end{equation*}
    By same argument, we obtain that $\tilde{v}\in H_0$ satisfies the following wave equation:
\begin{equation*}
\left\{
\begin{aligned}
&(\partial_{tt} - \Delta) \tilde{v} + q \tilde{v} = 0, &&\text{in } \Omega\times\big((0,t_1)\cup(t_2,T)), \\
&\tilde{v} = \partial_\nu \tilde{v}=0, && \text{on } \Sigma, \\
&\tilde{v}(x,0) =\partial_t \tilde{v}(x,0) = 0, && \text{in } \Omega.
\end{aligned}
\right.
\end{equation*}
    By the uniqueness of solutions and the observability result in Lemma \ref{Lemma3.1}, we also obtain 
\[
\tilde{v} \equiv 0 \quad \textrm{in } \Omega\times\big((0,t_1)\cup(t_2,T)).
\]
Then the conclusion follows analogously to the proof of Theorem 5.1 in \cite{lin_determining_2024}.
\end{comment}    
\end{proof}

To convey the main idea of the proof, we first consider the quadratic case of Theorem \ref{thm1.1}.

\subsection{Quadratic case}\label{subsec:quad}
In this case,  we consider the inverse problem for 
\[
\partial_{tt} u - \Delta u + q_1 u + q_2 u^2= F \quad \textrm{in } Q.
\]
%Proof of remark 1.2: \\
By assumption, there is an open set $\mathcal N\subset O_{m+1}$ such that 
\begin{align*}
\Lambda_{\vec q_1,F_1,g,h}(f)=\Lambda_{\vec q_2,F_2,g,h}(f) \quad \text{for any} \ f \in \mathcal N.
\end{align*}
Let $f_0 \in \mathcal N$, $f_1 \in O_{m+1}$ and $\epsilon >0$ small enough such that $f:=f_0 + \epsilon f_1 \in \mathcal N$. 

We apply the first-order linearization to the equation 
\begin{equation}\label{eq:qua_uj}
\left\{
\begin{aligned}
&(\partial_{tt}-\Delta)u_j + q_{1,j} u_j + q_{2,j} u_j^2 = F_j && \text{in } Q, \\
&u_j = f && \text{on } \Sigma, \\
&u_j =g,\quad \partial_t u_j = h && \text{on } \left\{t=0\right\}\times\Omega. 
\end{aligned}
\right.
\end{equation}
We denote $u_j^{(0)}$ the solution to 
\begin{equation*}
\left\{
\begin{aligned}
&(\partial_{tt} - \Delta) u_j^{(0)} + q_{1,j} u_j^{(0)} + q_{2,j} \left(u_j^{(0)}\right)^2 = F_j &&\text{in } Q, \\
&u_j^{(0)} = f_0 && \text{on } \Sigma, \\
&u_j^{(0)} =g, \quad \partial_t u_j^{(0)} = h && \text{on } \{t=0\}\times\Omega.
\end{aligned}
\right.
\end{equation*}
With well-posedness holding on a neighborhood of $f_0$, we can differentiate with respect to $\epsilon$ to obtain
\begin{equation}\label{eq:u_j1}
\left\{
\begin{aligned}
&(\partial_{tt}-\Delta)u_j^{(1)}+\left(q_{1,j}+2q_{2,j}u_j^{(0)}\right)u_{j}^{(1)}=0   &&  \text{in } Q,  \\
&u_j^{(1)}=f_1 &&  \text{on } \Sigma,\\
&u_j^{(1)}= \partial_t u_j^{(1)} =0 && \text{on } \left\{  t=0 \right\}\times\Omega.  
\end{aligned}
\right.
\end{equation}
for $u_j^{(1)}:=\frac{\partial u_j}{\partial \epsilon}\big|_{\epsilon=0}$. By the assumption that the DtN maps coincide, we obtain that the linearized DtN maps
\[D\Lambda_{\vec q_j,F_j,g,h}[f_0](f_1):=\frac{\delta}{\delta \epsilon}\Lambda_{\vec q_j,F_j,g,h}(f_0+\epsilon f_1)\big|_{\epsilon=0}=\partial_\nu u^{(1)}_j\big|_{\Sigma}\]
are identical for $j=1,2$. For  convenience, we set 
\begin{align*}
&\mathfrak q_j=q_{1,j}+2q_{2,j} u_{j}^{(0)}, \\
&v=u_1^{(1)}-u_2^{(1)}, \tilde{T}=\frac{T-T^*+t_2}{2},\tilde{Q}=(0,\tilde{T}) \times \Omega.
\end{align*}
Then  $v$ in  $(\tilde{T},T) \times \Omega$ satisfies
\begin{equation}
\left\{
\begin{aligned}
&\partial_{tt}v-\Delta v=0 \qquad &&\textrm{in } (\tilde{T},T)\times\Omega,\\
&v=0 && \textrm{on }(\tilde{T},T)\times\partial\Omega,\\
&v=u_1^{(1)}(T,x)-u_2^{(1)}(T,x), \partial_t v=\partial_t u_1^{(1)}(T,x)-\partial_tu_2^{(1)}(T,x) && \textrm{on } \{t=\tilde{T}\}\times\Omega. 
\end{aligned}
\right.
\end{equation}
By the observability result in Lemma \ref{Lemma3.1}, we can derive that $u_1(\tilde{T},x)=u_2(\tilde{T},x)$ and $\partial_t u_1(\tilde{T},x)=\partial_t u_2(\tilde{T},x)$ in $\Omega$. Combine with the same linearized DtN map, one has 
\begin{align*}
&v(0,x)=0,\ \partial_tv(0,x)=0    \quad &&\text{in} \ \Omega, \\
&v(\tilde{T},x)=0,\ \partial_tv(\tilde{T},x)=0  && \text{in} \  \Omega, \\
& v=0,\quad \partial_{\nu}v=0     && \text{on} \ \Sigma.
\end{align*}
Then  $v$ in  $\tilde{Q}$ satisfies
\begin{equation} \label{eq3.7}
\left\{
\begin{aligned}
&\partial_{tt}v-\Delta v+\mathfrak{q}_1 v=(\mathfrak{q_2-q_1})u_2 \qquad &&\textrm{in }  \tilde{Q},\\
&v=0 && \textrm{on }(0,\tilde{T})\times\partial\Omega,\\
&v= \partial_t v=0 && \textrm{on } \{t=0\}\times\Omega. 
\end{aligned}
\right.
\end{equation}
Let $\tilde{u}_1 \in C^2 \left(\overline{\tilde{Q}} \right)$ be a solution 
\begin{equation}
\partial_{tt}\tilde{u}_1-\Delta \tilde{u}_1+\mathfrak{q_1} \tilde{u}_1=0 \   \text{ in }\tilde{Q}.
\end{equation}
Multiplying both sides of the first equation in \eqref{eq3.7} by $\tilde{u}_1$, integration by parts yields that 
\begin{equation}
\int_{\tilde{Q}}(\mathfrak{q_2-q_1})\tilde{u}_1 u_2 dx dt =0.
\end{equation}
By assumption on the support of $\mathfrak{q_j}$, this implies 
\begin{equation}
\int_{t_1}^{t_2} \int_{\Omega}(\mathfrak{q_2-q_1})\tilde{u}_1 u_2 dx dt =0.
\end{equation}
 Following the argument in \cite{lin_determining_2024}, one can construct the geometrical optics (GO) solutions $v_j$ to be the GO solutions to 
 \begin{equation}
\partial_{tt}v_j -\Delta v_j + \mathfrak{q_j}v_j=0 \ \text{in} \ (t_1,t_2) \times \Omega, 
 \end{equation}
 with the form:
\[v_1(t,x)=e^{-i\tau[\psi(x)+t]}a_1(t,x)+R_1(t,x),\quad v_2(t,x)=e^{i\tau[\psi(x)+t]}a_2(t,x)+R_2(t,x),\]
where $\tau \in \mathbb{R}$ with $|\tau|>1$, $\psi(x)=|x-x_0|$ for some  $x_0 \in \overline{\Omega}$, $a(t,x)$ satisfies 
\[2a_t-2\nabla\psi\cdot\nabla a-\Delta\psi a=0\qquad \textrm{in }(t_1, t_2)\times\Omega,\]
and $R_\tau(t,x)$ satisfies 
\begin{equation*}
\left\{
\begin{aligned}
&(\partial_{tt}-\Delta+\mathfrak q)R_\tau=-e^{i\tau(\psi(x)+t)}\left(\partial_{tt}-\Delta+\mathfrak q\right)a \qquad &&\textrm{in } (t_1,t_2)\times\Omega,\\
&R_\tau=0 && \textrm{on }(t_1,t_2)\times\partial\Omega,\\
&R_\tau=\partial_t R_\tau=0 && \textrm{on } \{t=t_1\}\times\Omega \textrm{ or } \{t=t_2\}\times\Omega,
\end{aligned}
\right.
\end{equation*}
and the asymptotic decaying property
\[\lim_{|\tau|\rightarrow\infty}\|R_\tau\|_{L^2((t_1,t_2)\times\Omega)}=0.\]
By the approximation result in Lemma \eqref{lem:approx}, there are two sequences of complex-valued functions $u_k^1$ and $u_k^2$ such that for $j=1,2$, $u_k^j \in C^2 \left( \overline {\tilde{Q}} \right)$ is the solution to 
\begin{equation*}
\left\{
\begin{aligned}
&(\partial_{tt}-\Delta+\mathfrak q_j)u_k^j=0 \qquad &&\textrm{in } \tilde{Q},\\
&u_k^j=\partial_t u_k^j=0 && \textrm{on } \{t=0 \} \times\Omega,
\end{aligned}
\right.
\end{equation*}
and 
\begin{equation*}
u_k^j \to v_j \quad \text{in}  \ L^2((t_1,t_2) \times \Omega).
\end{equation*}
Then we can choose $\tilde{u}_1=u_k^1$ and $u_2=u_k^2$,  and let $k$ tends to $\infty$, then we obtain
\begin{equation}
\int_{t_1}^{t_2}\int_{\Omega}(\mathfrak{q_2-q_1})v_1 v_2 dx dt =0.
\end{equation}
By applying the similar arguments in \cite{kian_recovery_2019},
% By integration by parts, we get the following integration identity
% \begin{align*}
%     \int_0^T \int_{\Omega} (q_{1,j}+2q_{2,j} u_{0,j}-q_{1,k}-2q_{2,k}u_{0,k})u_j^{(1)}v_k dx dt =0,
% \end{align*}
% where $v_k$ is the solution of
% \begin{equation*}
% \left\{
% \begin{aligned}
% &(\partial_{tt} - \Delta) v_k + q_{1,j} v_k + q_{2,j} v_k^2 = 0, &&\text{in } (0,T)\times \Omega, \\
% &v_k = g, && \text{on } (0,T)\times \partial\Omega, \\
% &v_k = \partial_t v_k = 0, && \text{on } \{t=T\}.
% \end{aligned}
% \right.
% \end{equation*}
we can derive that $\mathfrak{q_1=q_2 } \   \text{in} \  Q$ and we set
\begin{align}\label{eq:Q1}
\mathfrak{q}=q_{1,1}+2q_{2,1} u_1^{(0)}=q_{1,2}+2q_{2,2}u_2^{(0)} \quad \text{in} \  Q. 
\end{align}
Next we apply the second-order linearization. We consider Dirichlet data $f=f_0+\epsilon_1 f_1 +\epsilon_2 f_2$ where $f_0 \in \mathcal N$, $f_1, f_2 \in O^{m+1}$ and $\epsilon_1$ and $\epsilon_2$ are real numbers small enough such that $f=f_0+\epsilon_1 f_1 +\epsilon_2 f_2 \in \mathcal N$. 
Still we still denote by $u_j$ the solution of \eqref{eq:qua_uj} and 
\[
w_{j}=\left.\frac{\partial^2 u_j}{\partial \epsilon_1 \partial \epsilon_2}\right|_{\epsilon_1=\epsilon_2=0} \qquad j=1,2.
\]
Then $w_j$ solves
\begin{equation*}
\left\{
\begin{aligned}
&(\partial_{tt}-\Delta)w_j+\mathfrak q w_j +2q_{2,j}u_{j}^{(1),1} u_{j}^{(1),2}=0 &&  \text{in} \  Q,  \\
&w_j =0 &&  \text{on } \Sigma, \\
&w_j=\partial_t w_j =0  &&  \text{on} \left\{  t=0 \right \}\times\Omega, 
\end{aligned}
\right.
\end{equation*}
where  $u_{j}^{(1),\ell}$, $\ell=1,2$ denotes the solution to the linear equation
\begin{equation*}
\left\{
\begin{aligned}
&(\partial_{tt}-\Delta)u_{j}^{(1),\ell}+\mathfrak q u_{j}^{(1),\ell}=0    &&  \text{in }  Q,   \\
&u_{j}^{(1),\ell}=f_\ell &&  \text{on } \Sigma, \\
&u_{j}^{(1),\ell}= \partial_t u_{j}^{(1),\ell} =0 && \text{on} \left\{  t=0 \right\}\times\Omega. 
\end{aligned}
\right.
\end{equation*}
By the uniqueness of solutions to the Dirichlet problem, we have
\begin{align*}
u^{(1),\ell}:=u_{1}^{(1),\ell}=u_{2}^{(1),\ell} \quad \text{in} \ Q, \quad\text{for} \ \ell=1,2.
\end{align*}
Furthermore, for $f_3\in O_{m+1}$, let $u_3$ solve the backward wave equation
\begin{equation*}
\left\{
\begin{aligned}
&(\partial_{tt}-\Delta)u_3+\mathfrak q u_3=0   \quad  &&\text{in} \ Q,    \\
&u_3=f_3 \quad && \text{on } \Sigma,  \\
&u_3= \partial_t u_3 =0 \quad && \text{on} \left\{  t=T \right\}\times \Omega.  
\end{aligned}
\right.
\end{equation*}
From assumption, we have 
\[\partial_\nu w_1|_\Sigma=\partial_\nu w_2|_\Sigma.\]
% Notice that $\partial_{\nu} w_1=\frac{\partial^2}{\partial \epsilon_1 \partial \epsilon_2}\Lambda_{q_{1,1},q_{2,1}...,q_{n,1},F_1}(f_0+\epsilon_1 f_1+\epsilon_2 f_2)|_{\epsilon_1=\epsilon_2=0}=\frac{\partial^2}{\partial \epsilon_1 \partial \epsilon_2}\Lambda_{q_{1,2},q_{2,2}...,q_{n,2},F_2}(f_0+\epsilon_1 f_1+\epsilon_2 f_2)|_{\epsilon_1=\epsilon_2=0}=\partial_{\nu} w_2$.
Integration by parts yields
\begin{equation}\label{eq:IBP}
\begin{aligned}
&\int_{0}^{T} \int_{\partial \Omega} \left.\frac{\partial^2}{\partial \epsilon_1 \partial \epsilon_2} \Lambda_{\vec q_j,F_j,g_j,h_j}(f)\right|_{\epsilon_1=\epsilon_2=0} \ f_3 \ dS dt \\
= &\int_0^T \int_{\partial \Omega} \partial_{\nu} w_j f_3 \ dS dt \\
=& \int_{0}^{T} \int_{\Omega} \Delta w_j u_3 + \nabla w_j \cdot \nabla u_3 \ dxdt \\
=& \int_0^T \int_{\Omega} \Delta w_j u_3 -  w_j \Delta u_3 \ dx dt \\
=& \int_0^T \int_{\Omega}(\partial_{tt} w_j +\mathfrak q w_j +2 q_{2,j}u^{(1),1} u^{(1),2})u_3 - w_j \Delta u_3 \ dx dt \\
=& \int_0^T \int_{\Omega} w_j(\partial_{tt}-\Delta + \mathfrak q)u_3+2 q_{2,j} u^{(1),1} u^{(1),2} u_3 \ dx dt \\
=& \int_0^T \int_{ \Omega} 2q_{2,j} u^{(1),1} u^{(1),2} u_3 \ dx dt.
\end{aligned}
\end{equation}
Then we have 
\begin{align}\label{eq:2nd_integral}
\int_0^T \int_{\Omega} (q_{2,2}-q_{2,1})u^{(1),1}u^{(1),2} u_3 \ dx dt =0.
\end{align}
By assumption on the support of $q_{i,j}$, this implies 
\[\int_{t_1}^{t_2}\int_\Omega(q_{2,2}-q_{2,1})u^{(1),1}u^{(1),2} u_3 \ dx dt =0.\]
The rest of the proof follows by the same argument as in \cite{lin_determining_2024}, which essentially implements the approximation property and the denseness of the product of the above GO solutions. More specifically, by Lemma \ref{lem:approx}, the integral identity \eqref{eq:2nd_integral} implies
\[\int_{t_1}^{t_2}\int_\Omega (q_{2,2}-q_{2,1})v_1v_2u_3\ dxdt=0\]
for $v_1$ and $v_2$ to be the GO solutions 
\[v_1(t,x)=e^{-i\tau[\psi(x)+t]}a_1(t,x)+R_1(t,x),\quad v_2(t,x)=e^{i\tau[\psi(x)+t]}a_2(t,x)+R_2(t,x),\]
where $\tau \in \mathbb{R}$ with $|\tau|>1$, $\psi(x)=|x-x_0|$ for some  $x_0 \in \overline{\Omega}$, $a(t,x)$ satisfies 
\[2a_t-2\nabla\psi\cdot\nabla a-\Delta\psi a=0\qquad \textrm{in }(t_1, t_2)\times\Omega,\]
and $R_\tau(t,x)$ has the asymptotic decaying property
\[\lim_{|\tau|\rightarrow\infty}\|R_\tau\|_{L^2((t_1,t_2)\times\Omega)}=0.\]
This leads to
\[(q_{2,2}-q_{2,1})u_3=0\qquad \textrm{ in } (t_1,t_2)\times\Omega,\]
which further implies
\[\int_{t_1}^{t_2}\int_\Omega (q_{2,2}-q_{2,1})u_3 v\ dxdt=0\]
for $v$ solving
\begin{equation*}
\left\{
\begin{aligned}
&(\partial_{tt}-\Delta+\mathfrak q) v=0   \quad  &&\text{in} \ Q,    \\
&v= \partial_t v =0 \quad && \text{on} \left\{  t=T \right\}\times \Omega.  
\end{aligned}
\right.
\end{equation*}
Apply above argument again, we eventually obtain 
\[q_{2,2}=q_{2,1}\qquad \textrm{in } (t_1,t_2)\times\Omega.\]
In the following, we denote $q_{2,j}$ by $q_2$.
% Taking $(q_{2,2}-q_{2,1}) u^{(1),1}$ as a whole, by known integral identity results (see ~\cite{lin_determining_2024} Section 5), we obtain 
% \begin{align*}
% (q_{2,2}-q_{2,1}) u^{(1),1}=0 \quad \text{in} \ (0,T) \times \Omega.
% \end{align*}
% For any $v$ satisfies
% \begin{equation*}
% \left\{
% \begin{aligned}
% &(\partial_{tt}-\Delta)v+Qv= 0,   && \text{in} \ (0,T) \times \Omega, \\
% &v=\partial_t v= 0,    && \text{on}\left \{ t=T \right \}.
% \end{aligned}
% \right.
% \end{equation*}
% We can get another integral identity.
% \begin{align*}
% \int_{0}^T \int_{\Omega}(q_{2,2}-q_{2,1}) u^{(1),1}v=0 \quad \text{in} \ (0,T) \times \Omega.
% \end{align*}
% Using the integral identity result again, we have
% \begin{align*}
% q_2:=q_{2,1}=q_{2,2}.
% \end{align*}
%Let us define $\psi \in E^{m+1}$ as the difference: 
Set 
\begin{align}\label{eq:psi}
\psi =u_1^{(0)}-u_2^{(0)}\in E^{m+1}.
\end{align}
Plugging into \eqref{eq:Q1}, we get 
\begin{align}
q_{1,1} =q_{1,2}+2q_2 \psi.
\end{align}
Furthermore, calculation leads to 
\begin{align*}
F_2= &(\partial_{tt}-\Delta)u_2^{(0)}+q_{1,2}u_2^{(0)}+q_2 \left(u_2^{(0)}\right)^2 \\
=&(\partial_{tt}-\Delta)\left(u_1^{(0)}+\psi \right)+q_{1,2}\left(u_1^{(0)} +\psi\right)+q_2 \left(u_1^{(0)} +\psi \right)^2 \\
=& F_1+(\partial_{tt}-\Delta) \psi +q_{1,2} \psi +q_2 \psi^2.
\end{align*}
The above formula suggests that $q_{1,2}-q_{1,1}$ and $F_2-F_1$ only depend on terms of $\psi$, with no term that is specifically dependent on $u_j^{(0)}$. In other words, the differences are independent of the choice of $f_0$. Therefore, this proves the conclusion of Theorem \ref{thm1.1} for the case $n=2$. 
\subsection{General case}
 We first prove Theorem \ref{lem_1.3}.

\begin{proof}[Proof of Theorem \ref{lem_1.3}]
The proof is by induction on the order of differentiation $k \in  \mathbb{N}$. Let us consider the Dirichlet data of the form 
\begin{align*}
f:=f_0+ \sum \limits_{l=1}^n \epsilon_l f_l,
\end{align*}
where $f_0 \in \mathcal N$, $f_l \in O^{m+1}$ and $\epsilon_l >0$ are small parameters such that $f_0 +\sum\limits_{l=1}^n \epsilon_lf_l \in \mathcal N$.
We denote $\vec{\epsilon}=(\epsilon_1,\epsilon_2,\dots,\epsilon_n)$, which means that $\vec\epsilon=0$ is equivalent to $\epsilon_1=\epsilon_2=...=\epsilon_n=0$. Let $u_{0,j}$ be the solution to the equation
\begin{equation*}
\left\{
\begin{aligned}
    &(\partial_{tt} - \Delta) u_{0,j} +a_j(t,x,u_{0,j}) = F_j &&\text{in } Q, \\
&u_{0,j} = f_0 && \text{on } \Sigma, \\
&u_{0,j} =g,\quad \partial_t u_{0,j} = h && \text{on } \{t=0\}\times\Omega. 
\end{aligned}
\right.
\end{equation*}
Denote $u_j^{(1),l}=\partial_{\epsilon_l}u_j|_{\vec\epsilon=0}$, then $u_j^{(1),l}$ solves
\begin{equation}\label{eq:u_j^(1)}
\left\{
\begin{aligned}
    &\left(\partial_{tt} - \Delta+\partial_za_j(t,x,u_{0,j})\right) u_j^{(1),l} = 0 &&\text{in } Q, \\
&u_j^{(1),l} = f_l && \text{on } \Sigma, \\
&u_j^{(1),l} = \partial_t u_j^{(1),l} = 0 && \text{on } \{t=0\}\times\Omega. 
\end{aligned}
\right.
\end{equation}
for $j=1,2$, and $l=1,2,\dots,n$. Therefore, by the uniqueness result for the linear inverse boundary value problems, we have
\begin{align*}
    \mathfrak q_1:=\partial_za_1(t,x,u_{0,1})=\partial_za_2(t,x,u_{0,2}) \quad \text{in} \  Q.
\end{align*}
Moreover, by the uniqueness of solutions to the Dirichlet problem \eqref{eq:u_j^(1)}, we have
\begin{align*}
    u^{(1),l}:=u_1^{(1),l}=u_2^{(1),l}
\end{align*}
for $l=1,2,\dots,n$.
Let $\vec\ell=(\ell_1,\ldots,\ell_n)\in\{0,1\}^n$. 
For $k=2,\ldots,n$, denote
\[u_j^{(k),\vec\ell}:=\partial_{\vec\epsilon}^{\vec\ell} u_j\big|_{\vec\epsilon=0}=\partial_{\epsilon_1}^{\ell_1}\partial_{\epsilon_2}^{\ell_2}\cdots \partial_{\epsilon_n}^{\ell_n}u_j\big|_{\vec\epsilon=0},\quad\textrm{ for }\ \vec\ell\in\{0,1\}^n,\ |\vec\ell|=\sum_{m=1}^n \ell_m=k.\]
It is first verified that for $k=2$, we have that $u_j^{(2),\vec\ell}$ with $|\vec\ell|=0$ satisfies
\begin{equation}\label{eq:u_j^k2)}
\left\{
\begin{aligned}
    &(\partial_{tt} - \Delta+ \mathfrak q_1)u_j^{(2),\vec\ell}+\partial_z^2 a_j(t,x,u_{0,j})\prod_{\ell_m\neq0} u^{(1),\ell_m}= 0 &&\text{in } Q, \\
&u_j^{(2),\vec\ell} = 0 && \text{on } \Sigma, \\
&u_j^{(2),\vec\ell} = \partial_t u_j^{(2),\vec\ell} = 0 && \text{on } \{t=0\}\times\Omega.
\end{aligned}
\right.
\end{equation}
(Here note that in order to determine $\partial_z^2 a_j(t,x,u_{0,j})$, it is sufficient to use just $\vec\ell=(1,1,0,\ldots,0)$, i.e., consider only $\partial_{\epsilon_1}\partial_{\epsilon_2}$ term. However, for the induction, we need solutions $u^{(2),\vec\ell}$ for all $|\vec\ell|=2$.) Then based on the previous demonstrating argument for the quadratic nonlinear case in \S\ \ref{subsec:quad}, one immediately obtains 
\[\partial_z^2a_1(t,x,u_{0,1})=\partial_z^2a_2(t,x,u_{0,2})=:\mathfrak q_2 \qquad \textrm{ in }Q.\]
Consequently, we obtain that the solutions satisfy
\[u_1^{(2),\vec\ell}=u_2^{(2),\vec\ell}=:u^{(2),\vec\ell}\qquad\textrm{ for }\vec\ell\in\{0,1\}^n,\ |\vec\ell|=2.\]
For induction, suppose that the first $k\leq n$ partials of $a_j$ and the solutions $u_j$ are identical, that is, for $0<k'\leq k$, $|\vec\ell|=k'$,
\[\partial_z^{k'}a_1(\cdot,\cdot,u_{0,1})=\partial_z^{k'}a_2(\cdot,\cdot,u_{0,2})=:\mathfrak q_{k'},\qquad u^{(k'),\vec\ell}_1=u^{(k'),\vec\ell}_2=:u^{(k'),\vec\ell}.\]
Taking the \( k + 1 \)-th order linearization leads to the equation 
\begin{equation}\label{eq:k+1}
\left\{
\begin{aligned}
&(\partial_{tt}-\Delta+\mathfrak q_1)u_j^{(k+1),\vec\ell}=\mathcal M(t,x)-\partial_z^{k+1}a_j(t,x,u_{0,j})\prod_{\ell_m\neq 0} u^{(1),\ell_m}\qquad &&\textrm{ in }Q,\\
&u_j^{(k+1),\vec\ell}=0 &&\textrm{ on }\Sigma,\\
&u_j^{(k+1),\vec\ell}=\partial_tu_j^{(k+1),\vec\ell}=0 && \textrm{ on }\{t=0\}\times\Omega.
\end{aligned}
\right.
\end{equation}
Here, the product $\prod_{\ell_m\neq 0} u^{(1),\ell_m}$ is over all non-zero components $\ell_m$ of $\vec\ell$ in $u^{(k+1),\vec\ell}$. Also, $\mathcal M(t,x)$ collects all terms containing $\mathfrak q_1, \ldots, \mathfrak q_k$ and $u^{(1),\ell}$, $u^{(2),\vec\ell}, \ldots, u^{(k),\vec\ell}$, and therefore is a known term in the equation.
Similarly, multiplying the equation in \eqref{eq:k+1} by a backward linear solution $v$ of
\[
\left\{
\begin{aligned}
&(\partial_{tt}-\Delta+\mathfrak q_1)v=0\qquad&&\textrm{ in }Q,\\
&v=\partial_tv=0 && \textrm{ on }\{t=T\}\times\Omega,
\end{aligned}
\right.
\]
and integration-by-parts as in \eqref{eq:IBP}, we obtain
\begin{equation}\label{eq:int_k+1}
\int_0^T\int_\Omega \left(\partial_z^{k+1}a_2(t,x,u_{0,2})-\partial_z^{k+1}a_1(t,x,u_{0,1})\right)u^{(1),1}u^{(1),2}\cdots u^{(1),k+1}v\ dx dt=0,
\end{equation}
where we only considered the term whose $\vec\ell=(1,\ldots,1,0,\ldots,0)$ with the first $k+1$ components being 1. Note that the known term $\mathcal M(t,x)$ will cancel when taking the difference after the integration-by-parts step. Following the approximation to the GO solutions argument as in \S \ref{subsec:quad}, we eventually prove
\begin{equation} \label{partial}
\partial_z^{k+1}a_1(t,x,u_{0,1})=\partial_z^{k+1}a_2(t,x,u_{0,2}).
\end{equation}
This completes the proof by induction of the theorem.
\end{proof}

\begin{proof}[Proof of Theorem \ref{thm1.1}]
By the observability inequality in Lemma \ref{Lemma3.1}, there exists a constant \(C > 0\) such that
\begin{equation*}
\|g_1 - g_2\|_{H_0^1(\Omega)} + \|h_1 - h_2\|_{L^2(\Omega)} \leq C \|\partial_{\nu}(u_1 - u_2)\|_{L^2((0,t_1) \times \partial\Omega)} = 0.
\end{equation*}
This implies \(g_1 = g_2\) and \(h_1 = h_2\) in \(\Omega\).\\
For any $q_{i,j}\in E^{m+1}$ and $F_j\in E^{m-1}$ for $i=1,2,\dots,n$, $j=1,2$. Set $\psi=u_{0,2}-u_{0,1}$, then $\psi \in E^{m+1}$ with $\psi|_{\Sigma}= \partial_{\nu} \psi|_{\Sigma}=0$. We need to show that
\begin{align}\label{eq:q_n-i}
    q_{n-i,1}=\sum\limits_{l=0}^i \binom{n-i+l}{l}q_{n-i+l,2}\psi^l
\end{align}
for $i=0,1,\dots,n-1$. By Theorem \ref{lem_1.3}, we have
\begin{align*}
    q_{n,1}=\frac{1}{n!} \partial_z^n a_1(t,x,u_{0,1})=\frac{1}{n!} \partial_z^n a_2(t,x,u_{0,2})=q_{n,2}\quad \text{in} \  (t_1,t_2)\times \Omega.
\end{align*}
Thus, the claim holds for $i=0$. We prove the claim by induction. Let us assume that \eqref{eq:q_n-i} holds for $i=0,1,\dots,k$. It suffices to show that \eqref{eq:q_n-i} holds for $i=k+1$.\\
Using Theorem \ref{lem_1.3} again, we have 
\begin{align*}
    \partial_z^{n-(k+1)} a_1(t,x,u_{0,1})&= \sum\limits_{i=0}^{k+1} \frac{(n-i)!}{(k+1-i)!}q_{n-i,1}u_{0,1}^{k+1-i}\\
    &=\sum\limits_{i=0}^{k+1} \frac{(n-i)!}{(k+1-i)!}q_{n-i,2}u_{0,2}^{k+1-i}= \partial_z^{n-(k+1)} a_2(t,x,u_{0,2}) \quad \text{in} \  (t_1,t_2)\times \Omega.
\end{align*}
After dividing by $(n-k-1)!$, the above reads
\begin{align}\label{eq:q_n-(k+1)}
    q_{n-(k+1),1}=q_{n-(k+1),2}+ \sum\limits_{i=0}^k \binom{n-i}{n-k-1}q_{n-i,2} u_{0,2}^{k+1-i}- \sum\limits_{i=0}^k \binom{n-i}{n-k-1}q_{n-i,1}u_{0,1}^{k+1-i}.
\end{align}
Using the induction assumption and the binomial expansion, we consider the coefficients of the term $(u_{0,1})^J$. We observe that the coefficient of $(u_{0,1})^J$ is
\begin{align}\label{eq:S_J}
S_J 
&= \sum_{i=0}^k \binom{n-i}{n-k-1} q_{n-i,2} \binom{k+1-i}{k+1-i-J} \psi^{k+1-i-J} \notag\\
&\quad 
- \binom{n-k-1+J}{n-k-1} \sum_{l=0}^{k+1-J} \binom{n-k-1+J+l}{l} q_{n-k-1+J+l,1} \psi^l \notag\\
&= \sum_{l=0}^{k+1-J} \binom{n-k-1+J+l}{n-k-1} q_{n-k-1+J+l,2} \binom{J+l}{l} \psi^l \notag\\
&\quad
- \binom{n-k-1+J}{n-k-1} \sum_{l=0}^{k+1-J} \binom{n-k-1+J+l}{l} q_{n-k-1+J+l,2} \psi^l
\end{align}
for $J=0,1,...,k+1$. A direct computation shows that
\begin{align*}
    \binom{n-k-1+J+l}{n-k-1}\binom{J+l}{l}=\binom{n-k-1+J}{n-k-1}\binom{n-k-1+J+l}{l}
\end{align*}
for $l=0,1,...,k+1-J$. Thus the term of $(u_{0,1})^J$ in \eqref{eq:q_n-(k+1)} is zero for $J=1,2,...,k+1$.\\
As for the zeroth power of $u_{0,1}$ in \eqref{eq:q_n-(k+1)}, we have
\begin{align}\label{eq:S0}
    S_0:=\sum\limits_{i=0}^k \binom{n-i}{n-k-1} q_{n-i,2} \psi^{k+1-i} =\sum\limits_{l=1}^{k+1} \binom{n-(k+1)+l}{l} q_{n-(k+1)+l,2} \psi^l.
\end{align}
Therefore, by plugging \eqref{eq:S_J} and \eqref{eq:S0} into \eqref{eq:q_n-(k+1)}, we have
\begin{align*}
    q_{n-(k+1),1}= q_{n-(k+1),2}+ \sum\limits_{l=1}^{k+1} \binom{n-(k+1)+l}{l} q_{n-(k+1)+l,2} \psi^l =\sum\limits_{l=0}^{k+1} \binom{n-(k+1)+l}{l} q_{n-(k+1)+l,2} \psi^l.
\end{align*}
This proves the induction step. It remains to prove the last equality of \eqref{eq:thm1_result}.\\
Based on the conclusion of \eqref{eq:q_n-i}, we can write
\begin{align*}
    a_1(t,x,u_{0,1}) = \sum\limits_{i=1}^n q_{i,1}u_{0,1}^i = \sum\limits_{i=0}^{n-1}q_{n-i,1} u_{0,1}^{n-i} = \sum\limits_{i=0}^{n-1} \sum\limits_{l=0}^i \binom{n-i+l}{l}q_{n-i+l,2}\psi^lu_{0,1}^{n-i}.
\end{align*}\\
On the other hand, we can express
\begin{align*}
    a_2(t,x,u_{0,2})  = \sum\limits_{i=0}^{n-1}q_{n-i,2} (u_{0,1}+\psi)^{n-i} = \sum\limits_{i=0}^{n-1} q_{n-i,2} \sum\limits_{m=0}^{n-i} \binom{n-i}{m} \psi^m u_{0,1}^{n-i-m}.
\end{align*}
Similar to the computation of (3.15), the coefficient of $(u_{0,1})^{n-J}$ in $a_2(t,x,u_{0,2})$ is 
\begin{align*}
    \sum\limits_{i=0}^{n-1} q_{n-i,2} \binom{n-i}{J-i} \psi^{J-i}= \sum\limits_{l=0}^Jq_{n-J+l,2} \binom{n-J+l}{l} \psi^l
\end{align*}
for ${J=0,1,...,n}$, which is equal to the coefficient of $(u_{0,1})^{n-J}$ in $a_1(t,x,u_{0,1})$ for ${J=0,1,\dots,n-1}$.\\
Thus, we have
\begin{align*}
    a_1(t,x,u_{0,1})-a_2(t,x,u_2)=-\sum \limits_{l=1}^n q_{l,2} \psi^l.
\end{align*}
Therefore,
\begin{align*}
    F_1 = F_2+(\partial_{tt} - \Delta)(u_{0,1} - u_{0,2}) + a_1(t,x,u_{0,1}) - a_2(t,x,u_{0,2})= F_2- (\partial_{tt} - \Delta) \psi-\sum\limits_{l=1}^nq_{l,2}\psi^l.
\end{align*}
Conversely, if there exist $\psi \in E^{m+1}$ with $\psi|_{\Sigma}=\partial_{\nu} \psi|_{\Sigma}=0$ makes (1.4) valid.\\
Let $u_1$ solve (1.2) for $j=1$, define
\begin{equation*}
    w:=u_1+\psi.
\end{equation*}
Then we have $(w|_{\Sigma},\partial_{\nu}w|_{\Sigma})=(u_1|_{\Sigma},\partial_{\nu} u_1|_{\Sigma})$, and
\begin{align*}
    (\partial_{tt} - \Delta) w + \sum_{i=1}^{n} q_{i,2} w^i &=(\partial_{tt}- \Delta) (u_1+\psi) + \sum_{i=1}^{n} q_{i,2} \left(\sum\limits_{m=0}^i \binom{i}{m} \psi^{i-m} u_1^{m}\right)\\
    &= (\partial_{tt}- \Delta) u_1 + \sum\limits_{m=1}^n \left(\sum\limits_{i=m}^n q_{i,2} \binom{i}{m} \psi^{i-m}\right) u_1^m+ (\partial_{tt}- \Delta)\psi +\sum\limits_{i=1}^nq_{i,2} \psi^i \\
    &=F_2.
\end{align*}
Hence $w$ solves $\partial_{tt} w - \Delta w + \sum_{i=1}^{n} q_{i,2} w^i = F_2$. Since $u_1$ and $w$ also have the same Cauchy data on $\partial \Omega$, it follows that the corresponding DtN maps are the same: $\Lambda_{\vec q_1,F_1,g,h}(f)=\Lambda_{\vec q_2,F_2,g,h}(f) \ \  \text{for any} \ f \in \mathcal{N}$. \\ 
\end{proof}

\begin{proof}[Proof of Corollary \ref{corollary1}]
By the observability inequality in Lemma \ref{Lemma3.1}, there exists a constant \(C > 0\) such that
\begin{equation*}
\|g_1 - g_2\|_{H_0^1(\Omega)} + \|h_1 - h_2\|_{L^2(\Omega)} \leq C \|\partial_{\nu}(u_1 - u_2)\|_{L^2((0,t_1) \times \partial\Omega)} = 0.
\end{equation*}
This implies \(g_1 = g_2\) and \(h_1 = h_2\) in \(\Omega\). Denote \(g = g_1 = g_2\) and \(h = h_1 = h_2\).
Let \(\tilde{u}_j\) solve the homogeneous boundary value problem:
\begin{equation}
\left\{
\begin{aligned}
&(\partial_{tt} - \Delta + q_j)\tilde{u}_j = F_j 
&& \text{in } Q, \\
&\tilde{u}_j = 0 
&& \text{on } \Sigma, \\
&\tilde{u}_j = g,\ \partial_t \tilde{u}_j = h
&& \text{on } \{t=0\} \times \Omega. 
\end{aligned}
\right.
\end{equation}
Define \(v_j = u_j - \tilde{u}_j\) and let \(\tilde{\Lambda}_{q_j}\) be the Dirichlet-to-Neumann map for
\begin{equation}
\left\{
\begin{aligned}
&(\partial_{tt} - \Delta + q_j)v_j = 0
&& \text{in } Q, \\
&v_j = f
&& \text{on } \Sigma, \\
&v_j = \partial_t v_j = 0
&& \text{on } \{t=0\} \times \Omega. 
\end{aligned}
\right.
\end{equation}
The difference of the DtN maps satisfies
\begin{equation*}
\tilde{\Lambda}_{q_1}(f) - \tilde{\Lambda}_{q_2}(f) = \partial_\nu (u_1 - \tilde{u}_1) - \partial_\nu (u_2 - \tilde{u}_2) = \partial_\nu (\tilde{u}_2 - \tilde{u}_1) \ \text{for} \ f \in \mathcal{N} .
\end{equation*}
Choosing \(f_0 \in \mathcal{N}\), $f_1 \in O^{m+1} $ and $\epsilon_0$ small enough such that \(f = f_0+\epsilon_0 f_1\). Thus, one has
\begin{align*}
\partial_\nu (\tilde{u}_2 - \tilde{u}_1)&=\tilde{\Lambda}_{q_1}(f_0+\epsilon f_1) - \tilde{\Lambda}_{q_2}(f_0+\epsilon f_1) \\ &=  \tilde{\Lambda}_{q_1}(f_0)-\tilde\Lambda_{q_2}(f_0)+\epsilon\left(\tilde{\Lambda}_{q_1}(f_1)-\tilde{\Lambda}_{q_2}(f_1)\right)\\
&=\partial_\nu (\tilde{u}_2 - \tilde{u}_1) +\epsilon \left(\tilde{\Lambda}_{q_1}(f_1)-\tilde{\Lambda}_{q_2}(f_1)\right)  \ \text{for all} \   f_1 \in O^{m+1} \ \text{and}  \ \epsilon<\epsilon_0.
\end{align*}
It yields that $\tilde{\Lambda}_{q_1}(f) = \tilde{\Lambda}_{q_2}(f)$ for all $f \in O^{m+1}$.
Following the linear case analysis in Section \ref{subsec:quad}, we conclude \(q_1 = q_2 =: q\) in \((t_1, t_2) \times \Omega\).
Define \(\psi = \tilde{u}_1 - \tilde{u}_2\), which satisfies
\begin{equation}
\left\{
\begin{aligned}
&(\partial_{tt} - \Delta + q)\psi = F_1 - F_2 
&& \text{in } Q, \\
&\psi = \partial_\nu \psi = 0 
&& \text{on } \Sigma, \\
&\psi = \partial_t \psi = 0 
&& \text{on } \{t=0\}\times\Omega.
\end{aligned}
\right.
\end{equation}
Given \(supp(q) \subset (t_1, t_2) \times \Omega\) and \(supp(F_j) \subset (t_1, T) \times \overline{\Omega}\), the solution vanishes in \((0, t_1) \times \Omega\), which completes the proof.
\end{proof}

\begin{proof}[Proof of Theorem \ref{thm_2}]
By the observability inequality in Lemma \ref{Lemma3.1}, there exists a constant \(C > 0\) such that
\begin{equation*}
\|g_1 - g_2\|_{H_0^1(\Omega)} + \|h_1 - h_2\|_{L^2(\Omega)} \leq C \|\partial_{\nu}(u_1 - u_2)\|_{L^2((0,t_1) \times \partial\Omega)} = 0.
\end{equation*}
This implies \(g_1 = g_2\) and \(h_1 = h_2\) in \(\Omega\).\\
Under  assumption(1), using \eqref{eq:thm1_result}, one has 
\begin{equation}
q_{n-1,1}=q_{n-1,2}+\binom{n}{1}q_{n,2} \psi  \quad \text{in} \  (t_1,t_2) \times \Omega. 
\end{equation}
Then we can derive that $\psi=0 $ in $(t_1,t_2) \times \Omega$. Then all the coefficients are determined and $F_1=F_2+\partial_{tt} \psi - \Delta \psi$ in $(t_2,T) \times \Omega$.\\
Under  assumption (2), $\psi \in E^{m+1}$ satisfies the equation
\begin{equation}
\left\{
\begin{aligned}
&\partial_{tt} \psi - \Delta \psi + \sum_{i=1}^n q_{i,2} \psi^i = 0 
&& \text{in } Q, \\
&\psi  =\partial_{\nu} \psi= 0
&& \text{on } \Sigma, \\
&\psi = \partial_t \psi = 0 
&& \text{on } \{t=0\}\times\Omega.
\end{aligned}
\right.
\end{equation}
By Sobolev embedding theorem, one has $\psi \in C(Q)$. Then we have $|\partial_{tt} \psi - \Delta \psi| \leq C |\psi| $ in  $Q$. 
Define the energy functional
\begin{equation}
E(t) = \frac{1}{2} \int_{\Omega} \left( (\partial_t \psi(x,t))^2 + |\nabla \psi(x,t)|^2 \right) dx.
\end{equation}
Due to the boundary condition $\psi = 0$ on $\partial \Omega \times [0,T]$, Poincaré's inequality holds: there exists $C_P > 0$ such that
\begin{equation}
\int_{\Omega} \psi(x,t)^2 \, dx \leq C_P \int_{\Omega} |\nabla \psi(x,t)|^2 \, dx.
\end{equation}
Differentiating $E(t)$ with respect to time  yields
\begin{align*}
\frac{dE}{dt} =& \int_{\Omega} \left( \partial_t \psi  \partial_{tt} \psi + \nabla \psi \cdot \nabla \partial_t \psi \right) dx \\
 =&  \int_{\Omega} (\partial_{tt} \psi -\Delta \psi)  \partial_t \psi \, dx + \int_{\partial \Omega} \partial_{\nu} \psi \partial_t \psi \, dS  \\
 =& \int_{\Omega} \partial_t \psi  (\partial_{tt} \psi - \Delta \psi) \, dx.
 \end{align*}
 \begin{align*}
\left| \frac{dE}{dt} \right| \leq \int_{\Omega} |\partial_t \psi| |\partial_{tt} \psi - \Delta \psi| \, dx \leq  & C \int_{\Omega} |\partial_t \psi|  |\psi| \, dx \\
 \leq& \frac{C}{2} \int_{\Omega} \left( (\partial_t \psi)^2 + \psi^2 \right) dx \\
 \leq& \frac{C}{2} \cdot 2(1 + C_P) E(t) = C(1 + C_P) E(t).
\end{align*}
Let $K = C(1 + C_P)$, then
\[
\left| \frac{dE}{dt} \right| \leq K E(t),
\]
At $t = 0$, we have $\psi = 0$ and $\partial_t \psi = 0$ in $\Omega$. Thus we have $E(0) = 0$. From $\frac{dE}{dt} \leq K E(t)$ and $E(0) = 0$, Gronwall's inequality gives
\[
E(t) \leq E(0) e^{K t} = 0 \quad \text{for all} \quad t \in [0,T].
\]
Since $E(t) \geq 0$, it follows that $E(t) = 0$ for all $t \in [0,T]$.
Finally, $E(t) = 0$ implies
 $\partial_t \psi = 0$ and $\nabla \psi = 0$ almost everywhere in $Q$. By the continuity of $\psi$, we conclude that $\psi = 0$  in $Q$.
Then all the coefficients are determined.
\begin{comment}
By using \eqref{eq:thm1_result}, we have $\psi=0$ in $(0,T)\times \Omega$, then $q_i:=q_{i,1}=q_{i,2} \ \text{for} \ i=1,2,\cdots,n \  \text{and} \ F_1=F_2$ in $Q$, that means potential functions and source functions are uniquely determined.
Set $v=u_2-u_1$ to satisfy
\[
\left\{
\begin{aligned}
&\partial_{tt} v-\Delta v+K(t,x)v=0 &&\textrm{ in }Q,\\
&v=0   && \text{on}  \ \Sigma,\\
&v=g_1-g_2, \partial_tv=h_1-h_2 && \textrm{ on }\{t=0\}\times\Omega,
\end{aligned}
\right.
\]
where 
\begin{equation*}
K(t,x)=\displaystyle\sum_{i=1}^{n} \displaystyle\sum_{k=0}^{i-1} q_iu_2^ku_1^{i-1-k}
\end{equation*}
Since $q_i \in E^{m+1}$ for $i=1,2,\cdots,n$,$u_j \in E^{m+1}$ for $j=1,2$, we have $K(t,x)\in E^{m+1}$. According to the observability inequality(in Lemma \ref{Lemma3.1}),there exists a constant $C>0$ such that
\begin{equation}
||g_1-g_2||_{H_0^1(\Omega)}+||h_1-h_2||_{L^2(\Omega)} \leq C||\partial_{\nu}v||_{L^2(\Sigma)}=0,
\end{equation}
which implies that $g_1=g_2$ and $h_1=h_2$. This completes the proof of Theorem \ref{thm_2}.
\end{comment}
\end{proof}
\begin{proof}[Proof of Corollary \ref{corollary apply}]
Let $u_{0,j}$ be the solution to 
\begin{equation} \label{eqaj}
\left\{
\begin{aligned}
&\partial_{tt} u_{0,j} - \Delta u_{0,j} + a_j(t,x,u_{0,j}) = F_j
&& \text{in } Q, \\
&u_{0,j}  = f_0 
&& \text{on } \Sigma, \\
&u_{0,j} =g_0, \partial_t u_{0,j}  = h_0
&&\text{on } \{t=0\}\times\Omega.
\end{aligned}
\right.
\end{equation}
Case 1: The nonlinearity in Case 1 is $a_j(t,x,z)=q_j \mathrm{e}^{z}$.  Using \eqref{partial} with $k=1$, we have
\begin{align} \label{case1}
q_1 \mathrm{e}^{u_{0,1}}=\partial_z a_1(t,x,u_{0,1})=\partial_z a_2(t,x,u_{0,2})=q_2 \mathrm{e}^{u_{0,2}}.
\end{align}
 Then we set $\psi=u_{0,2}-u_{0,1}$. From \eqref{case1}, we have $q_1=q_2 \mathrm{e}^{\psi}$ in $[t_1,t_2]\times \Omega$. Then by using \eqref{eqaj}, we have                 
 \begin{equation*}
F_2-F_1=(\partial_{tt}-\Delta)(u_{0,2}-u_{0,1})+q_2 \mathrm{e}^{u_{0,2}}-q_1 \mathrm{e}^{u_{0,1}}=\partial_{tt} \psi-\Delta \psi \ \ \text{in} \ [t_1,t_2] \times \Omega.
 \end{equation*}
 Case 2 :  The nonlinearity in Case 2 is $ a_j(t,x,z)=q_j \sin(z)$. Using \eqref{partial} with $k=1$, we have
 \begin{equation} \label{eqcos}
q_1 \cos(u_{0,1})=q_2 \cos (u_{0,2}) \ \ \text{in} \ [t_1,t_2] \times \Omega.
 \end{equation} \label{eqsin}
 and with $k=2$,  we have 
  \begin{equation} 
q_1 \sin(u_{0,1})=q_2 \sin (u_{0,2}) \ \  \text{in} \ [t_1,t_2] \times \Omega.
 \end{equation}
 By the Euler identity, \eqref{eqcos} and \eqref{eqsin} is equivalent to 
 \begin{equation}\label{i}
q_1 \mathrm{e}^{i u_{0,1}}=q_2 \mathrm{e} ^{i u_{0,2}}  \ \ \text{in} \ [t_1,t_2] \times \Omega.
 \end{equation}
 We define $\psi = u_{0,2}-u_{0,1}$. Then by using \eqref{eqaj} and \eqref{eqsin}, one has
 \begin{equation*}
F_2-F_1=(\partial_{tt}-\Delta)(u_{0,2}-u_{0,1})+q_2 \sin(u_{0,2})-q_1 \sin(u_{0,1})=\partial_{tt} \psi -\Delta \psi \ \ \text{in} \ [t_1,t_2] \times \Omega,
 \end{equation*}
 and by \eqref{i},
 \begin{equation*}
q_1=q_2 \mathrm{e}^{i \psi}  \ \ \text{in} \ [t_1,t_2] \times \Omega.
 \end{equation*}
 Since $q_1$ and $q_2$ are real-valued functions and $\psi$ is continuous, we have $\mathrm{e}^{i \psi} \equiv 1$ or $\mathrm{e}^{i \psi} \equiv -1$ in $[t_1,t_2]\times \Omega$. \\
 We show by using boundary determination that  $\psi \equiv 0$ in $[t_1,t_2] \times \Omega$. Let $\epsilon$ be a small real number, $f_0\in \mathcal{N}$, $f_1 \in O^{m+1}$ and $f=f_0+\epsilon f_1$. Using the first linearization method around the solution $u_{0,j}$ of \eqref{eqaj}, one  has 
\begin{equation}  \label{eqfirst}
\left\{
\begin{aligned}
&\partial_{tt} u_{j}^{(1)} - \Delta u_{j}^{(1)} +q_j \cos(u_{0,j}) u_j^{(1)} = 0 
&& \text{in } Q, \\
&u_{j}^{(1)}  = f_1 
&& \text{on } \Sigma, \\
&u_{j}^{(1)} = \partial_t u_{j}^{(1)} = 0 
&& \text{on } \{t=0\}\times\Omega.
\end{aligned}
\right.
\end{equation}
 We can apply the same method of coefficient determination for the linear wave equation in Section \ref{subsec:quad}, then we have 
 \begin{equation*}
q_1 \cos(f_0)=q_2 \cos(f_0)   \ \ \text{in} \ [t_1,t_2] \times  \Omega.
 \end{equation*}
  Using $q_j \cos(u_{0,j}) \in C(Q)$, we conclude that $q_1 \cos(f_0)=q_2 \cos(f_0)  \ \text{on} \ [t_1,t_2] \times   \partial \Omega$. We can choose $f_0 \in \mathcal{N}$  properly such that $\cos(f_0(x_0,t_0)) \neq 0$. Thus, we derive that  $q_1=q_2$ and $\mathrm{e}^{i \psi}\equiv 1    \ \text{in} \ [t_1,t_2] \times \Omega$. Since $\psi$ is a constant, we have $\psi =0 \  \text{in} \ [t_1,t_2] \times \Omega$. Thus, we conclude that 
 \begin{equation*}
  q_1=q_2, F_1=F_2 \ \ \text{in} \ [t_1,t_2] \times \Omega.
 \end{equation*}
 Case 3: The nonlinearity in Case 3 is $ a_j(t,x,z)=q_j z \sin z $. Using \eqref{partial} with $k=1,2,3,4$, we have
 \begin{align}
&q_1 \sin(u_{0,1})+q_1 u_1 \cos(u_{0,1})=q_2 \sin(u_{0,2})+q_2 u_2 \cos(u_{0,2}) \ \ \text{in} \ [t_1,t_2] \times \Omega, \label{eq1} \\
&2q_1 \cos(u_{0,1})-q_1u_1 \sin(u_{0,1})= 2q_2 \cos(u_{0,2})-q_2u_2 \sin(u_{0,2}) \ \ \text{in} \ [t_1,t_2] \times \Omega,   \label{eq2} \\
&-3q_1 \sin(u_{0,1})-q_1u_1 \cos(u_{0,1})=  -3q_2 \sin(u_{0,2})-q_2u_2 \cos(u_{0,2})\ \ \text{in} \ [t_1,t_2] \times \Omega, \label{eq3}   \\
&-4q_1 \cos(u_{0,1})+q_1u_1 \sin(u_{0,1})=-4q_2 \cos(u_{0,2})+q_2u_2 \sin(u_{0,2}) \ \ \text{in} \ [t_1,t_2] \times \Omega. \label{eq4}
 \end{align}
 Add equation \eqref{eq1} to equation \eqref{eq3} and add equation \eqref{eq2} to equation \eqref{eq4},  we have
 \begin{equation}
  q_1 \sin(u_{0,1})=q_2 \sin(u_{0,2})  , \ q_1 \cos(u_{0,1})=q_2 \cos (u_{0,2}) \ \ \text{in} \ [t_1,t_2] \times \Omega.
 \end{equation}
 By using the same analysis in the proof of Case 2, we can derive that 
 \begin{equation*}
q_1=q_2, \ F_1=F_2  \ \ \text{in} \ [t_1,t_2] \times \Omega.
 \end{equation*}
 Case  4: The nonlinearity in Case 4 is $ a_j(t,x,z)=p_j \sin z+q_j \mathrm{e}^{z} $. Using \eqref{partial} with $k=1,2,3,4$, we have
 \begin{align}
&p_1 \cos(u_{0,1})+q_1 \mathrm{e}^{u_{0,1}}= p_2 \cos(u_{0,2})+q_2 \mathrm{e}^{u_{0,2}}   \ \ \text{in} \ [t_1,t_2] \times \Omega, \label{e1} \\
&-p_1 \sin(u_{0,1})+q_1 \mathrm{e}^{u_{0,1}}=-p_2 \sin(u_{0,2})+q_2 \mathrm{e}^{u_{0,2}}  \ \ \text{in} \ [t_1,t_2] \times \Omega, \label{e2} \\
& -p_1 \cos(u_{0,1})+q_1 \mathrm{e}^{u_{0,1}}= -p_2 \cos(u_{0,2})+q_2 \mathrm{e}^{u_{0,2}}\ \ \text{in} \ [t_1,t_2] \times \Omega, \label{e3} \\
 &p_1 \sin(u_{0,1})+q_1 \mathrm{e}^{u_{0,1}}=p_2 \sin(u_{0,2})+q_2 \mathrm{e}^{u_{0,2}} \ \ \text{in} \ [t_1,t_2] \times \Omega. \label{e4}
 \end{align} 
 Now considering equation \eqref{e1} minus equation \eqref{e3} and equation \eqref{e4} minus equation \eqref{e2},  it yields that 
 \begin{equation*}
 p_1 \cos(u_{0,1})=p_2 \cos (u_{0,2}), \ p_1 \sin(u_{0,1})=p_2 \sin(u_{0,2})  , \ \ \text{in} \ [t_1,t_2] \times \Omega.
 \end{equation*}
 By using the same analysis in the proof of Case 2, we can derive that 
 \begin{equation*}
p_1=p_2, \ F_1=F_2  \ \ \text{in} \ [t_1,t_2] \times \Omega.
 \end{equation*}
 Case 5 : The nonlinearity in Case 5 is $a_j(t,x,z)=p_j \sin z+q_j \cos z $. Using \eqref{partial} with $k=1,2$, we have
 \begin{align}  
&p_1 \cos(u_{0,1})-q_1 \sin(u_{0,1})=p_2 \cos(u_{0,2})-q_2 \sin(u_{0,2})  \ \ \text{in} \ [t_1,t_2] \times \Omega, \label{t1}\\
&-p_1 \sin(u_{0,1})-q_1 \cos(u_{0,1})= -p_2 \sin(u_{0,2})-q_2 \cos(u_{0,2})   \ \ \text{in} \ [t_1,t_2] \times \Omega.  \label{t2}
 \end{align} 
 We define $\psi =u_{0,2}-u_{0,1}$. From equation \eqref{t1} and \eqref{t2}, we have 
 \begin{equation}
p_2=p_1 \cos\psi+q_1 \sin\psi, \ q_2=-p_1 \sin\psi + q_1 \cos \psi  \ \ \text{in} \ [t_1,t_2] \times \Omega. 
 \end{equation}
 Then by using \eqref{eqaj} and \eqref{t2}, we have
\begin{align*}
&F_2-F_1=\partial_{tt} \psi -\Delta \psi+p_2 \sin(u_{0,2})+q_2 \cos(u_{0,2})-p_1 \sin(u_{0,1})-q_1 \cos(u_{0,1}) \\&=\partial_{tt}\psi -\Delta \psi \ \ \ \text{in} \ [t_1,t_2] \times \Omega. 
\end{align*}
 Case 6 : The nonlinearity in Case 6 is $ a_j(t,x,z)=p_j z \mathrm{e}^{z} + \sum_{k=1}^n q_{k,j}z^k $.Using \eqref{partial} with $k=n+1,n+2$, we have
 \begin{align}  
p_1(u_{0,1}+n+1) \mathrm{e}^{u_{0,1}}=p_2(u_{0,2}+n+1) \mathrm{e}^{u_{0,2}}  \ \ \text{in} \ [t_1,t_2] \times \Omega, \label{s1}  \\
p_1(u_{0,1}+n+2) \mathrm{e}^{u_{0,1}}=p_2(u_{0,2}+n+2)  \mathrm{e}^{u_{0,2}}  \ \ \text{in} \ [t_1,t_2] \times \Omega.  \label{s2}
\end{align}
Combining \eqref{s1} and \eqref{s2}, we obtain
\begin{equation}
p_1 \mathrm{e}^{u_{0,1}}=p_2 \mathrm{e}^{u_{0,2}} \quad \text{and} \quad p_1 u_{0,1} \mathrm{e}^{u_{0,1}}=p_2 u_{0,2} \mathrm{e}^{u_{0,2}}\ \ \text{in} \ [t_1,t_2] \times \Omega.  
\end{equation}
Since $p_j \neq 0 \ \ \text{in} \ [t_1,t_2] \times \Omega $, we have $u_{0,1}=u_{0,2} \ \ \text{in} \ [t_1,t_2] \times \Omega$, which implies $p_1=p_2 \ \ \text{in} \ [t_1,t_2] \times \Omega$. We process by considering  \eqref{partial} with $k=n,n-1,\cdots,1$ in order, then we can derive that $q_{k,1}=q_{k,2}$ for $k=1,2,\cdots,n \ \ \text{in} \ [t_1,t_2] \times \Omega $. Then we complete the proof.
\end{proof}

\bibliographystyle{amsplain}
\bibliography{ref}
\end{document}